\tikzstyle{aNode} = [circle, fill = black]
\tikzstyle{bNode} = [circle,draw = black, thick]
\newcommand{\ppoints}[1]{%
\begin{tikzpicture}[inner sep = 0.7pt, #1]%
\node (1) at (0,-2) [aNode]{};
\node (3) at (1.5,-2) [aNode]{};
\node (2) at (0.75,-1) [aNode]{};
\end{tikzpicture}%
}
\def\points{\ppoints{scale=0.08}}
\newtheorem{theorem}{Theorem}
\newtheorem{lemma}[theorem]{Lemma}
\newtheorem{definition}[theorem]{Definition}
\newtheorem{conjecture}[theorem]{Conjecture}
\newcommand\NN{{\mathbb N}}
\newcommand\ZZ{{\mathbb Z}}
\newcommand\cross{\operatorname{Cross}}
\newcommand\fP{\mathfrak{P}}
\newcommand\cP{\mathcal{P}}
\newcommand\ep{\varepsilon}
\newcommand\pu{{\pi_{\points}}}
\DeclareTextCompositeCommand{\v}{OT1}{l}{l\nobreak\hspace{-.1em}'}
\DeclareTextCompositeCommand{\v}{OT1}{t}{t\nobreak\hspace{-.1em}'\nobreak\hspace{-.15em}}
\title{Uniform Tur\'an density beyond $3$-graphs}
\author{Ander Lamaison\thanks{Extremal Combinatorics and Probability Group (ECOPRO),  Institute for Basic Science (IBS), Daejeon, South Korea. Supported by IBS-R029-C4. \textbf{Email address}: ander@ibs.re.kr}}
\date{}
\begin{document}

\maketitle

\begin{abstract}In the 1980s, Erd\H{o}s and S\'os first introduced an extremal problem on hypergraphs with density constraints. Given an $r$-uniform hypergraph $F$ (or $r$-graph for short), its uniform Tur\'an density $\pu(F)$ is the smallest value of $d$ in which every hypergraph $H$ in which every linear-sized subhypergraph of $H$ has edge density at least $d$ contains $F$ as a subgraph. The first non-zero value of $\pu(F)$ was not found until 30 years later.

Progress in studying the set of values of the uniform Tur\'an density of $r$-graphs has been uneven in terms of $r$: to this day there are infinitely many non-zero values known for $r=3$, a single non-zero value known for $r=4$ and none for $r\geq 5$. In this paper we obtain the first explicit values of $\pu$ for all uniformities, by proving that for every $r\geq 3$ there exist $r$-graphs $F$ with $\pu(F)=1/4$ and with $\pu(F)=\binom{r}{2}^{-\binom{r}{2}}$.\end{abstract}

\section{Introduction}

One of the central types of problem in extremal graph and hypergraph theory is the so-called Tur\'an-type problems. These problems concern the largest number of edges in a hypergraph with a prescribed number of vertices and avoiding a fixed subgraph.

One of the most basic Tur\'an-type problems is determining the Tur\'an density of hypergraphs. Let $F$ be an $r$-uniform hypergraph (or $r$-graph for short). Its Tur\'an density, denoted by $\pi(F)$, is the largest real number $d$ such that, for all $\varepsilon>0$ and for all $N$, there exists an $r$-graph $H$ on $n\geq N$ vertices with edge density at least $d-\varepsilon$ (i.e. has at least $(d-\varepsilon)\binom{n}{r}$ edges) and which is $F$-free (i.e. does not contain $F$ as a subgraph).

Erd\H{o}s and Stone~\cite{ErdS46} showed that the Tur\'an density of graphs is determined by the chromatic number $\chi(G)$, specifically $\pi(G)=\frac{\chi(G)-2}{\chi(G)-1}$. No similar characterization is known or conjectured in the case of $r$-graphs with $r\geq 3$. Erd\H{o}s offered \$500 for determining the Tur\'an density of any complete hypergraph $K_n^{(r)}$ with $n>r\geq 3$, and \$1000 for determining all values of $\pi(K_n^{(r)})$. No such value is known as of today, and even the case $F=K_4^{(3)-}$, the hypergraph obtained from $K_4^{(3)}$ by removing one edge, remains open.

For many choices of $F$, the densest $F$-free hypergraphs have edges which are very unevenly distributed within its vertex set. For example, if $F=K_4^{(3)}$, the conjectured value of $\pi(F)$ is $5/9$, and Tur\'an~\cite{Tur61} constructed $F$-free $3$-graphs $H_n$ on $n$ vertices with edge density tending to $5/9$. In $H_n$, the vertex set can be split into three independent sets of size $n/3$. In other words, despite the \emph{global} edge density being $5/9$, the \emph{local} edge-density in some large subsets of the vertex set is as low as $0$.

We will now formalize this notion of local density. Let $H$ be an $r$-graph. We say that $H$ is $(d,\ep,\points)$-dense if every set $S\subseteq V(H)$ with $|S|\geq \varepsilon |V(H)|$ satisfies $e(S)\geq (d-\varepsilon)\binom{|S|}{r}$. We say that a sequence of hypergraphs $\{H_n\}_{n=1}^\infty$ is \emph{locally $d$-dense} if there exists a sequence $\{\varepsilon_n\}_{n=1}^\infty$ such that each $H_n$ is $(d,\varepsilon_n,\points)$-dense, $\varepsilon_n\rightarrow 0$, and $|V(H_n)|\rightarrow\infty$.

\begin{definition}
    The uniform Tur\'an density of an $r$-graph $F$, denoted by $\pu(F)$, is the maximum value of $d$ such that there exists a locally $d$-dense sequence of $F$-free $r$-graphs.
\end{definition}

Erd\H{o}s and S\'os\cite{ErdS82} introduced uniform Tur\'an density in the 1980s, but it would take over thirty years for the first non-zero value to be determined, when Glebov, Kr\'al' and Volec~\cite{GleKV16} proved that $\pu(K_4^{(3)-})=1/4$.

The uniform Tur\'an density of every graph is 0: for every $\varepsilon>0$, every locally $\varepsilon$-dense sequence of graphs contains arbitrarily large cliques. Therefore, the smallest value of $r$ for which the uniform Tur\'an density of $r$-graphs is a non-trivial parameter is 3. There are several families of $3$-graphs whose uniform Tur\'an density is known, such as tight cycles~\cite{BucCKMM23} and large stars~\cite{LamW24}.

In addition to individual values of $\pu(F)$, other results have been proved about the uniform Tur\'an density of $3$-graphs. Reiher, R\"odl and Schacht~\cite{ReiRS18} characterized the $3$-graphs $F$ with $\pu(F)=0$, and proved that the uniform Tur\'an density of a $3$-graph cannot lie in the interval $(0,1/27)$. Garbe, Kr\'al' and the author~\cite{GarKL24} constructed $3$-graphs $F$ with $\pu(F)=1/27$, meaning that $1/27$ is in fact the minimum positive value of $\pu$ in $3$-graphs.

One reason why so many results have been proved about the uniform Tur\'an density of $3$-graphs is the development of certain methods for its study. The first systematic method was described by Reiher~\cite{Rei20}, who introduced a structure called reduced hypergraphs, and showed its connection to uniform Tur\'an density. Another useful tool is palettes, a way of generating locally dense sequences of $3$-graphs introduced by Reiher based on constructions by Erd\H{o}s and Hajnal~\cite{ErdH72} and R\"odl~\cite{Rod86}. Recently, the author~\cite{Lam24} proved that, for any $3$-graph $F$, $\pu(F)$ is the supremum of the densities of palette constructions not containing $F$.

In contrast, very little is known about the uniform Tur\'an density of $r$-graphs with $r\geq 4$. Kohayakawa, Nagle, R\"{o}dl and Schacht~\cite{KohNRS10} proved that if $F$ is linear (i.e. no two edges intersect in more than one vertex) then $\pu(F)=0$. There is only one non-zero value of $\pu(F)$ known, which was found as a byproduct of its classical Tur\'an density. Let $F$ be the unique $4$-graph on five vertices with three edges. Gunderson and Semeraro~\cite{GunS17} proved that $\pi(F)=1/4$. The sequence of $F$-free hypergraphs with edge density $1/4$ that they generated is also locally $1/4$-dense, so as a corollary, one can deduce that $\pu(F)=1/4$.

Among the properties of the uniform Tur\'an density of $r$-graphs that remain unknown for $r\geq 4$, one of them is the characterization of the hypergraphs $F$ satisfying $\pu(F)=0$. We will propose one such characterization in Conjecture~\ref{conj:dens0}. 

The structure yielded by the hypergraph regularity lemma, which lies at the heart of Reiher's reduced hypergraphs, is one of the causes behind this difficulty increase when $r\geq 4$. The regularity lemma produces a \emph{layered partition}, in which we obtain a partition of each of the sets $\binom{V}{1}, \binom{V}{2}, \dots, \binom{V}{r-1}$, and thanks to a tehnique used by Reiher, the first layer can be `tamed' in locally dense sequences of hypergraphs. This means that for $r=3$, one only needs to pay attention to one layer of the regularity partition, whereas for $r\geq 4$ there are multiple layers with a non-trivial behavior.

There are other exact results known using more restrictive notions of local density, introduced by Reiher, R\"odl and Schacht~\cite{ReiRS18c} and whose corresponding Tur\'an densities are denoted by $\pi_i(F)$. In this hierarchy, $\pi_0(F)$ and $\pi_1(F)$ correspond to the classical and uniform Tur\'an densities, respectively. Lin, Wang and Zhou~\cite{LinWZ23} showed that the minimum positive value of $\pi_{r-2}(F)$ among $r$-graphs $F$ is $r^{-r}$.

In this paper we find the first non-zero values of the uniform Tur\'an density of $r$-graphs for general $r$. First we will prove that $1/4$ is a value for every $r\geq 3$, generalizing the results from Glebov, Kr\'al' and Volec for $r=3$, and by Gunderson and Semeraro for $r=4$.

\begin{theorem}\label{thm:main14}
    For every $r\geq 3$ there exists an $r$-graph $F$ with $\pu(F)=1/4$.
\end{theorem}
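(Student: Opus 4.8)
The plan is to reduce the general $r$ case to the known $r=3$ result of Glebov, Kr\'al' and Volec by an ``iterated link'' construction. For $r=3$ we already have a $3$-graph $F_3=K_4^{(3)-}$ with $\pu(F_3)=1/4$. For larger $r$, I would build $F_r$ by ``coning'': take the vertex set of $F_{r-1}$ together with a new vertex $x$, and let the edges of $F_r$ be exactly the sets $e\cup\{x\}$ with $e\in E(F_{r-1})$. Equivalently, $F_r$ is the $r$-graph whose link at $x$ is $F_{r-1}$ and which has no edge avoiding $x$; unfolding the recursion, $F_r$ has $r-3$ apex vertices $x_1,\dots,x_{r-3}$, all lying in every edge, with the link of $\{x_1,\dots,x_{r-3}\}$ equal to $K_4^{(3)-}$. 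The claim is that $\pu(F_r)=1/4$ for every $r\ge 3$.

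For the upper bound $\pu(F_r)\le 1/4$, I would start from a locally $d$-dense sequence of $F_r$-free $r$-graphs $\{H_n\}$ with $d>1/4$ and derive a contradiction. Fix a typical ordered $(r-3)$-tuple $(v_1,\dots,v_{r-3})$ of vertices of $H_n$ and consider its link, the $3$-graph $L$ on $V(H_n)\setminus\{v_1,\dots,v_{r-3}\}$ whose edges are the triples $t$ with $t\cup\{v_1,\dots,v_{r-3}\}\in E(H_n)$. Since $H_n$ is $F_r$-free, $L$ is $K_4^{(3)-}$-free. The main work is to show that for a suitable choice of the tuple, the resulting $3$-graphs $L$ form a locally $d'$-dense sequence with $d'>1/4$, contradicting $\pu(K_4^{(3)-})=1/4$. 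This is an averaging argument: the density of $L$ inside a set $S$ is the fraction of $r$-sets $e\supseteq\{v_1,\dots,v_{r-3}\}$ with $e\setminus\{v_1,\dots,v_{r-3}\}\subseteq S$ that are edges, and by local density of $H_n$ the average of this quantity over tuples is at least $d-\ep$ whenever $|S|$ is linear; one then picks a tuple that does well simultaneously on a bounded-size net of test sets $S$ and transfers this to all large $S$. The delicate point, and the main obstacle, is that choosing $\{v_1,\dots,v_{r-3}\}$ to be ``good for every large $S$'' at once requires care: I would handle it by first passing to a subsequence via a regularity/removal-type cleaning, or more simply by observing that it suffices to control $S$ ranging over a covering family of at most $O(\ep^{-1})$ sets, and then a union bound over this bounded family lets a random tuple work.

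For the lower bound $\pu(F_r)\ge 1/4$, I would lift the extremal construction for $K_4^{(3)-}$. Let $\{G_n\}$ be a locally $1/4$-dense sequence of $K_4^{(3)-}$-free $3$-graphs. Define $H_n$ on the vertex set $A_1\sqcup\dots\sqcup A_{r-3}\sqcup V(G_n)$, where the $A_i$ are ``apex'' classes of size $|V(G_n)|$ each (a constant blow-up keeps densities unchanged); put an $r$-set $e$ in $H_n$ exactly when $e$ meets each $A_i$ in exactly one vertex and the remaining three vertices of $e$ lie in $V(G_n)$ and form an edge of $G_n$. Then $H_n$ is $F_r$-free because any copy of $F_r$ would have all of $A_1,\dots,A_{r-3}$ represented in each edge, forcing its link triples to form $K_4^{(3)-}$ in $G_n$; and a short computation shows $\{H_n\}$ is locally $1/4$-dense, since for a large set $S$ the induced density of $H_n$ is governed by the induced density of $G_n$ on $S\cap V(G_n)$ (large because $|S\cap V(G_n)|\ge |S|-(r-3)|V(G_n)|/\dots$ — here one uses that $|S|\ge \ep|V(H_n)|$ forces $|S\cap V(G_n)|$ to be a positive fraction of $|V(G_n)|$, after taking the apex classes small enough relative to $\ep$). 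Combining the two bounds gives $\pu(F_r)=1/4$.
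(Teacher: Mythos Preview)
Your lower bound construction does not work. In your $H_n$ on $A_1\sqcup\cdots\sqcup A_{r-3}\sqcup V(G_n)$, every edge meets each $A_i$ in exactly one vertex, so the set $S=A_1$ (or any $A_i$, or indeed $V(H_n)\setminus V(G_n)$) spans no edges at all. Since $|A_1|=|V(G_n)|=|V(H_n)|/(r-2)$ is a fixed positive proportion of the vertex set, $H_n$ is not $(d,\ep,\points)$-dense for any $d>0$ once $\ep<1/(r-2)$; the sequence is therefore not locally $1/4$-dense. Your proposed fix of ``taking the apex classes small enough relative to $\ep$'' cannot help: the definition of local density requires $\ep_n\to 0$, so no fixed positive apex fraction survives, and if you let the apex fraction $\alpha\to 0$ then the global edge density of $H_n$ is of order $\alpha^{r-3}\to 0$. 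The cone/apex construction is intrinsically non-uniform and cannot witness a positive uniform Tur\'an density.

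Your upper bound sketch also has a real gap. The step ``it suffices to control $S$ ranging over a covering family of at most $O(\ep^{-1})$ sets'' is not justified and, as stated, is false for the purpose you need: knowing that $L$ has density $\ge 1/4+\ep'$ on each set in a bounded family says nothing about its density on an arbitrary linear-size $S$, and there is no monotonicity or sub-additivity that transfers lower bounds from a net to all sets. You genuinely need a single $(r-3)$-tuple whose link is uniformly dense on \emph{every} large $S$, and an averaging/union-bound over $O(\ep^{-1})$ test sets does not deliver that. In fact, it is not hard to build locally dense $r$-graphs in which no vertex has a locally dense link, which is exactly the obstruction your induction needs to overcome.

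For comparison, the paper does not reduce to $r=3$ at all. It constructs, for each $r$, a quasi-linear $r$-graph $F$ that admits the descriptive sequence $XX\cdots XZZYY\cdots Y$ and is head-tail-mixing (Theorem~\ref{thm:exist14}), and proves directly, via hypergraph regularity and an analysis of $(k,r)$-reduced hypergraphs (Lemma~\ref{lem:red14}), that any such $F$ has $\pu(F)=1/4$ (Theorem~\ref{thm:dens14}). The lower bound comes from a genuinely uniform random $2$-colouring of pairs (heads red, tails blue), not from an apex construction.
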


Let $\pi_r=\binom{r}{2}^{-\binom{r}{2}}$. We will also prove the existence of $r$-graphs with uniform Tur\'an density equal to $\pi_r$, generalizing the case $r=3$ by Garbe, Kr\'al' and the author. This number is significant because we conjecture that this is the minimum positive value of the uniform Tur\'an density of $r$-graphs. In other words, we expect the set of values of the uniform Tur\'an density to present a `jump': for every $r$-graph $F$ we either have $\pu(F)=0$ or $\pu(F)\geq\pi_r$. The case $r=3$ was proved by Reiher, R\"odl and Schacht~\cite{ReiRS18}. This will be discussed in the concluding remarks.

\begin{theorem}\label{thm:mainpi}
    For every $r\geq 3$ there exists an $r$-graph $F$ with $\pu(F)=\binom{r}{2}^{-\binom{r}{2}}$.
\end{theorem}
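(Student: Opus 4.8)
The plan is to generalize the $r=3$ construction of Garbe, Král' and the author. First I would recall the palette machinery: a palette over a set of colours, together with a colouring rule, generates a locally $d$-dense sequence of $r$-graphs, and the uniform Tur\'an density of $F$ is governed by whether $F$ embeds into the blow-up of such a construction. For $r=3$, the extremal palette achieving $1/27$ uses colours indexed by the $3! = \binom{3}{2}^{\binom 32}$ orderings... more precisely, one works with a ground set on which edges receive labels and a tight-cycle-like forbidden pattern forces the density down to exactly $\binom{r}{2}^{-\binom r2}$. The natural $r$-uniform analogue is to let the colour of an edge $\{v_1 < \dots < v_r\}$ be determined by, for each of the $\binom r2$ pairs $i<j$ inside the edge, a symbol drawn from a set of size $\binom r2$, chosen so that the symbols are `consistent' across overlapping edges only in a very rigid way. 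This gives a palette with $\binom r2^{\binom r2}$ colour classes, yielding a locally $\pi_r$-dense sequence.

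The key steps, in order, would be: (1) Define the target $r$-graph $F$ explicitly — I expect it to be (a mild variant of) a tight path or tight cycle of controlled length, or the $r$-uniform generalization of the specific $3$-graph from \cite{GarKL24}, chosen so that any homomorphic image in a palette construction must `repeat' a colour assignment in a forbidden way. (2) Prove the lower bound $\pu(F) \geq \pi_r$: exhibit the explicit palette/construction above, verify it is $(\pi_r, \ep_n, \points)$-dense with $\ep_n \to 0$ (this is the routine density computation — each edge independently lands in one of $\binom r2^{\binom r2}$ classes, and local density on large sets concentrates), and verify that the resulting hypergraphs are $F$-free by checking the rigidity of the colouring rules precludes the pattern defining $F$. (3) Prove the upper bound $\pu(F) \leq \pi_r$: here I would invoke the reduced-hypergraph / regularity framework (Reiher~\cite{Rei20} and its $r$-uniform layered version), and argue that any locally $d$-dense $F$-free sequence with $d > \pi_r$ forces, via a pigeonhole/counting argument across the $\binom r2$ layers of the regularity partition, an appearance of the forbidden configuration, hence a copy of $F$ — contradiction. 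The characterization of $\pu$ via palettes (the author's~\cite{Lam24} result) is stated only for $3$-graphs in the excerpt, so for general $r$ I would expect the upper bound to require a direct regularity argument rather than a black-box palette-optimality statement.

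The main obstacle I anticipate is the upper bound, step (3). For $r = 3$ the argument of \cite{ReiRS18} relies on the fact that, after taming the first layer, only one non-trivial layer of the regularity partition remains, so a single colouring of pairs controls everything; for $r \geq 4$ there are $r - 2$ genuinely non-trivial layers interacting, and one must show that an $F$-free reduced hypergraph of density exceeding $\pi_r$ is impossible across \emph{all} layers simultaneously. I expect this to be handled by an inductive or product structure: the $\binom r2$ pairs inside an $r$-set can be organized so that the layers decouple into $\binom r2$ essentially independent binary choices, each contributing a factor $\binom r2^{-1}$ to the density bound, and the forbidden configuration in $F$ is designed precisely to obstruct the one surviving `diagonal' assignment. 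Making this decoupling rigorous — i.e. showing the reduced hypergraph must respect this product structure, or else already contains $F$ — is the technical heart, and I would expect it to mirror, but substantially extend, the length of the $3$-uniform argument. The $1/4$ case (Theorem~\ref{thm:main14}) should be a warm-up using the same templates with a simpler two-colour palette.
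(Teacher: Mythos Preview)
Your overall framework is right in spirit --- lower bound via a palette-type construction with $\binom{r}{2}$ colours on pairs, upper bound via hypergraph regularity --- and your lower-bound construction in step~(2) is essentially the one the paper uses. But your guess for $F$ in step~(1), and consequently your diagnosis of the main obstacle in step~(3), are both off in a way that matters.

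The paper does \emph{not} take $F$ to be a tight path, tight cycle, or anything with large edge intersections. Instead it restricts to \emph{quasi-linear} $r$-graphs: every edge has a unique ``twin'' intersecting it in exactly two vertices, and all other edge pairs intersect in at most one vertex. This choice is the key structural idea, and it is made precisely to \emph{avoid} the obstacle you identified. Because no three vertices of $F$ lie in two distinct edges, when one applies the counting lemma the assignments $f_3,\dots,f_{r-1}$ of triples, quadruples, etc.\ to regularity classes are automatically well-defined; only the second layer $f_2$ (pairs) requires any work. So the ``$r-2$ genuinely non-trivial layers interacting'' never materialize: the reduced-hypergraph analysis lives entirely at the level of pairs, encoded as a $(k,r)$-reduced $\binom{r}{2}$-graph, and the upper bound reduces to a single-layer pigeonhole argument (Lemma~\ref{lem:redpi}) showing density $>\pi_r$ forces some \emph{inconsistent descriptive sequence} on a large index set.

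The real difficulty, which you do not anticipate, is the \emph{existence} side: one must build a quasi-linear $F$ that is simultaneously (i) inconsistent under every vertex ordering, and (ii) admits \emph{every} inconsistent descriptive sequence under some ordering. Property~(ii) is needed because the upper-bound argument only guarantees \emph{some} inconsistent sequence, not a specific one. The paper constructs such an $F$ by starting from a nowhere-empty linear $(2r-2)$-graph on a $d$-dimensional grid (one dimension per inconsistent sequence), splitting selected edges into twin pairs, and using a Fishburn--Graham-type multidimensional Erd\H{o}s--Szekeres lemma to verify~(i). None of this is suggested by the tight-path/cycle ansatz, and a tight path would fail: its edges intersect in $r-1$ vertices, so the higher regularity layers would indeed all be in play, and your anticipated difficulty would be genuine and, as far as current techniques go, unresolved.
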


The organization of this paper is as follows. In Section~\ref{sec:prelim} we will list some results that will be used in the proof of our main theorems. Section~\ref{sec:descript} will introduce the concept of descriptive sequences, and will split Theorem~\ref{thm:main14} and Theorem~\ref{thm:mainpi} each into an `existence' statement and a `density' statement. The existence statements will be proved in Section~\ref{sec:exist}. Two important tools in the proof of the density statements, namely reduced hypergraphs and hypergraph regularity, will be introduced in Section~\ref{sec:reduced} and Section~\ref{sec:reg}, respectively. Section~\ref{sec:density} will be devoted to proving the density statements. Finally, in Section~\ref{sec:concluding} we will present some concluding remarks.

\section{Preliminaries}\label{sec:prelim}

One tool that we will use is Ramsey's theorem, which allows us to find monochromatic cliques in edge colorings of complete hypergraphs.

\begin{theorem}[Ramsey's theorem~\cite{Ram30}]
    Let $r$, $m$ and $k$ be positive integers. There exists a number $n$ such that, whenever the edges of a complete $r$-uniform hypergraph on $n$ vertices are colored using $k$ colors, there exists a subset of $m$ vertices in which every edge receives the same color.
\end{theorem}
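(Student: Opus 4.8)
The statement is the classical finite Ramsey theorem for hypergraphs, and the plan is to prove it by induction on the uniformity $r$, feeding the $(r-1)$-uniform case into a greedy extraction and closing with the pigeonhole principle. Write $R_r(m;k)$ for a value of $n$ that works for the parameters $r,m,k$. For the base case $r=1$, a $k$-coloring of the $n$ vertices (viewed as $1$-subsets) is just a map into $[k]$, so by pigeonhole $R_1(m;k)=k(m-1)+1$ works. For the inductive step, assume that $R_{r-1}(\ell;k)$ is defined for every $\ell$ and every number of colors, and let me produce a value of $R_r(m;k)$.

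Put $N:=k(m-1)+1$ and define a finite sequence by $a_0:=r-1$ and $a_j:=R_{r-1}(a_{j-1};k)+1$ for $1\leq j\leq N$; the claim is that $n:=a_N$ works. Given a $k$-coloring $c$ of the $r$-subsets of a set $W_0$ with $|W_0|=n$, I would run the following extraction for $i=1,\dots,N$: having a set $W_{i-1}$ with $|W_{i-1}|\geq a_{N-i+1}$, pick any $v_i\in W_{i-1}$ and consider the auxiliary $k$-coloring $c_i$ of the $(r-1)$-subsets of $W_{i-1}\setminus\{v_i\}$ defined by $c_i(T):=c(T\cup\{v_i\})$; since $|W_{i-1}\setminus\{v_i\}|\geq a_{N-i+1}-1=R_{r-1}(a_{N-i};k)$, the induction hypothesis yields a $c_i$-monochromatic set $W_i\subseteq W_{i-1}\setminus\{v_i\}$, say of color $\gamma_i\in[k]$, with $|W_i|\geq a_{N-i}$, which keeps the sizes above the thresholds needed to continue. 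After $N$ steps this produces distinct vertices $v_1,\dots,v_N$, a nested chain $W_0\supseteq W_1\supseteq\dots\supseteq W_N$ with $v_j\in W_{j-1}$, and colors $\gamma_1,\dots,\gamma_N$. The key point is that the color of any $r$-subset of $\{v_1,\dots,v_N\}$ is determined by its least-indexed vertex: if $i<j_1<\dots<j_{r-1}$, then $v_{j_1},\dots,v_{j_{r-1}}\in W_i$, so $c(\{v_i,v_{j_1},\dots,v_{j_{r-1}}\})=c_i(\{v_{j_1},\dots,v_{j_{r-1}}\})=\gamma_i$.

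To conclude, apply pigeonhole to $\gamma_1,\dots,\gamma_N$: since $N=k(m-1)+1$, some color $\gamma$ is attained at a set of indices $i_1<\dots<i_m$ of size $m$, and then $\{v_{i_1},\dots,v_{i_m}\}$ is a monochromatic clique of size $m$, because each of its $r$-subsets has its least-indexed vertex among the $v_{i_s}$ and hence receives color $\gamma$. This completes the induction, with the explicit (if far from optimal) bound $n=a_N$.

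I do not expect a genuine obstacle here: the theorem is classical and only the existence of a finite $n$ is used later in the paper. The points that require care are the bookkeeping in the inductive step — choosing the thresholds $a_j$ so that $|W_i|$ never drops below $R_{r-1}(a_{N-i};k)$ before the $(r-1)$-uniform case is reapplied — and checking that, after the extraction, the color of an $r$-set is genuinely a function of its smallest index. If a sharper bound were desired one could instead run the Erd\H{o}s--Rado argument, which replaces the $N$-fold iteration of $R_{r-1}$ by a single application and yields a much smaller (tower-type) bound.
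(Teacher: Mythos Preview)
Your proof is correct: this is the standard inductive argument for the finite hypergraph Ramsey theorem, and the bookkeeping with the thresholds $a_j$ is set up properly so that the greedy extraction runs for all $N$ steps and the colour of each $r$-subset of $\{v_1,\dots,v_N\}$ is indeed determined by its least index.

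There is nothing to compare against, however: the paper does not prove Ramsey's theorem but merely states it with a citation to Ramsey's original paper, and then uses the existence of the numbers $R_r(m,k)$ as a black box. Your argument supplies exactly what the paper assumes. (One minor quibble: your closing remark that Erd\H{o}s--Rado yields a ``much smaller'' bound is not quite right---both your iterated argument and the Erd\H{o}s--Rado stepping-up lemma give tower-type bounds in $r$; the latter is cleaner to state but not parametrically smaller.)
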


We will denote the least value of $n$ with this property as $R_r(m,k)$. These numbers are often referred to as Ramsey numbers.

\begin{definition}Let $S\subseteq\ZZ^d$ be a set, and let $\prec$ be a linear order on the set $S$. We say that $S$ is \emph{sorted by the $i$-th coordinate} if for any $\vec x=(x_1,\dots, x_d),\vec y=(y_1, \dots, y_d)\in S$, if $x_i<y_i$ holds then $\vec x \prec \vec y$. We say that $S$ is \emph{inversely sorted by the $i$-th coordinate} if for any $\vec x,\vec y\in S$, if $x_i<y_i$ holds then $x\succ y$. 

We say that $S$ is \emph{sorted by layers} if there exists some $i$ such that $S$ is sorted by the $i$-th coordinate or inversely sorted by the $i$-th coordinate. \end{definition}

The following lemma follows directly from a theorem of Fishburn and Graham~\cite{FisG93}, by noting that the \emph{lexicographic order} defined in that paper is in particular sorted by layers.

\begin{lemma}\label{lem:FisGra} For every $k,d\in\NN$ there exists $N=N(k,d)$ with the following property: for every linear ordering $\prec$ on the set $[N]^d$, there exist sets $S_1, S_2, \dots, S_d$, each with size $k$, such that $S_1\times S_2\times \dots\times S_d$ is sorted by layers by $\prec$.\end{lemma}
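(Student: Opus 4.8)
# Proof Proposal for Lemma~\ref{lem:FisGra}

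The plan is to derive this lemma as a consequence of the Fishburn--Graham theorem by tracking what that theorem actually produces. Recall that Fishburn and Graham study linear orderings of grids $[N]^d$ and show that one can always find a large combinatorial subgrid $S_1 \times \cdots \times S_d$ on which the induced order coincides with a \emph{lexicographic} order with respect to \emph{some} permutation of the coordinates and \emph{some} choice of direction (increasing or decreasing) on each coordinate. More precisely, their result gives a function $N_0(k,d)$ such that any linear order on $[N_0(k,d)]^d$ admits sets $S_1, \dots, S_d$ of size $k$ for which the restriction of the order to $S_1 \times \cdots \times S_d$ is a lexicographic order: there is a priority ordering of the coordinates, and the order compares two points by looking at the highest-priority coordinate in which they differ.

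The key observation is purely definitional: a lexicographic order is always sorted by layers in the sense of the preceding definition. Indeed, let $i$ be the coordinate of highest priority in the lexicographic order on $S_1 \times \cdots \times S_d$. Then for any two distinct points $\vec x, \vec y$ in the grid with $x_i \neq y_i$, they differ in coordinate $i$, so the lexicographic comparison is already decided at that coordinate: if $x_i < y_i$ then $\vec x \prec \vec y$ (when coordinate $i$ has increasing direction) or $\vec x \succ \vec y$ (when it has decreasing direction), and this holds regardless of the other coordinates. This is exactly the statement that $S_1 \times \cdots \times S_d$ is sorted, or inversely sorted, by the $i$-th coordinate, hence sorted by layers. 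So I would simply set $N(k,d) = N_0(k,d)$, invoke the Fishburn--Graham theorem to extract the sets $S_1, \dots, S_d$ on which the order is lexicographic, and then read off the ``sorted by layers'' conclusion from the highest-priority coordinate.

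The main thing to be careful about is matching conventions: the ``lexicographic order'' of Fishburn and Graham may be stated with a fixed coordinate order and fixed directions, or with these varying, and one must check that their extraction lemma indeed allows the permutation and the sign pattern to be arbitrary (which is necessary, since an arbitrary linear order on $[N]^d$ certainly cannot always be made to agree with the standard lexicographic order on a subgrid). Assuming the cited theorem is stated in the natural ``up to permutation and reflection'' form, there is no real obstacle here — the lemma is essentially a reformulation, discarding the extra information that the lower-priority coordinates are also ordered lexicographically and keeping only the behavior of the top coordinate. The one-line proof is therefore: apply Fishburn--Graham, and observe that the top-priority coordinate witnesses the ``sorted by layers'' property.
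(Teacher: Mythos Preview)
Your proposal is correct and is exactly the approach taken in the paper: invoke the Fishburn--Graham theorem to obtain a subgrid on which the induced order is lexicographic, and then note that any lexicographic order is in particular sorted by layers via its highest-priority coordinate. The paper states this as a one-line deduction, and your write-up simply unpacks that deduction in more detail.
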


Observe that when $d=1$, setting $N=(k-1)^2+1$, we obtain the Erd\H{o}s-Szekeres theorem: any ordering of $[N]$ contains a monotone subsequence with $k$ terms. 

The next lemma allows us to find subsets of ordered sets which span disjoint intervals. The method used in this proof has been implicitly used before (see for example~\cite{BarV98}, which is a result in the same spirit), but perhaps the most explicit description of the algorithm can be found in the solution of Problem 5 from the International Mathematical Olympiad 2017. 

\begin{lemma}\label{lem:IMO} Let $k$ be a positive integer. Let $A_1, A_2, \dots, A_k$ be disjoint finite sets, and let $A$ be their union. For every linear order $\prec$ on $A$, there exist sets $B_1, \dots, B_k$ with $B_i\subseteq A_i$ and $|B_i|\geq |A_i|/k$, satisfying the following property: for any $i\neq j$ there do not exist elements $x,z\in B_i$ and $y\in B_j$ such that $x\prec y\prec z$. \end{lemma}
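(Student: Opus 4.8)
The plan is to process the elements of $A$ one at a time in increasing $\prec$-order, maintaining a growing chain of "active" sets that together will form the $B_i$'s. At each moment we keep, for every $i$, a (possibly empty) set $B_i$ of already-committed elements of $A_i$, with the invariant that the committed elements respect a global ordering of the indices: there is a permutation of the indices such that all committed elements of the first set precede (in $\prec$) all committed elements of the second, and so on. When a new element $x\in A_j$ is scanned, we look at where $j$ currently sits in this committed ordering; if adding $x$ to $B_j$ would violate the invariant (because some $B_{j'}$ with $j'$ ordered after $j$ is already nonempty and contains an element $\prec x$, or symmetrically), we must first discard the offending sets. The key accounting is to show that each $A_i$ loses at most a $(1-1/k)$-fraction of its elements through these discards.

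The cleanest way to run this is the IMO~2017 Problem~5 argument. Maintain a stack of indices; the sets currently on the stack are the "live" ones and their committed elements already form $\prec$-intervals in the order they appear on the stack. Scan $A$ in increasing $\prec$-order; when we see an element of $A_j$: if $j$ is on the stack, pop everything above $j$ (those popped sets are "closed off" forever, keeping whatever they had committed — these become final $B_i$'s for those $i$) and then add the new element to $B_j$; if $j$ is not on the stack, push $j$ and add the element to $B_j$. Each index is pushed at most once between consecutive pops of it, and every time an index is popped, a distinct element of its set (the one being scanned that caused the pop) is "charged"; since there are $k$ indices, each index is pushed at most $k$ times in total, so for each $i$ at most $k-1$ elements of $A_i$ end up discarded beyond the final surviving block, giving $|B_i|\ge |A_i|/k$ after taking the largest block. (One sets $B_i$ to be the largest of the $\le k$ intervals of $A_i$ that were ever "live and then closed".)

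It then remains to verify the no-nesting conclusion: if $x,z\in B_i$ and $y\in B_j$ with $i\ne j$, then we cannot have $x\prec y\prec z$. This follows from the interval structure — each final $B_i$ is a contiguous $\prec$-block of committed elements that was live over one maximal stack-epoch, and during that epoch no element of any other set was committed strictly between two elements of $B_i$ (an element of $A_j$ scanned in that range would either already be on the stack above $i$, hence not committed below, or would get pushed above $i$ and be popped — hence removed — before the next element of $B_i$ is committed). The main obstacle is bookkeeping: making precise the notion of a "block" of $B_i$ and proving the charging argument gives the $1/k$ bound simultaneously with the interval property; once the stack invariant is stated correctly ("committed elements of stack sets form consecutive $\prec$-intervals, ordered by stack position") everything else is a direct induction on the scan.
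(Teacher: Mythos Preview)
Your stack algorithm and its charging do not work as written. The claim that each index is pushed at most $k$ times is false: take $k=2$ with $A_1=\{1,3,5,\dots,2m-1\}$ and $A_2=\{2,4,\dots,2m\}$ in the natural order. Index~$1$ stays at the bottom of the stack throughout and acquires a single block $\{1,3,\dots,2m-1\}$, while index~$2$ is popped and re-pushed at every odd step, producing $m$ singleton blocks. The charge you describe (``the element being scanned that caused the pop'') lies in $A_j$ for the index $j$ \emph{below} $i$ on the stack, not in $A_i$, so it cannot bound the number of pops of $i$. Worse, even granting a bound on the number of blocks, choosing the largest block of each colour does not give the non-crossing property: the blocks produced by this procedure form a laminar family, and in the example above every block of $A_2$ is nested inside the single block of $A_1$, so any nonempty choice of $B_1,B_2$ yields $x\prec y\prec z$ with $x,z\in B_1$ and $y\in B_2$. (Your description is also internally inconsistent: in one sentence popped sets are ``closed off forever'', in the next each index may be pushed up to $k$ times.)

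The paper's proof --- and the actual IMO~2017/5 idea it cites --- is a short induction on $k$ with no stack: scan $A$ in $\prec$-order until the first moment some $A_i$ has at least $|A_i|/k$ elements in the scanned prefix $C$; set $B_i=A_i\cap C$, delete $C$ entirely, and recurse on the remaining $k-1$ sets $A_j\setminus C$, each of which retains more than a $(k-1)/k$-fraction of its elements by the minimality of $C$. The interval property is then immediate, since $B_i$ lies entirely below everything chosen in the recursion.
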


\begin{proof} We proceed by induction on $k$. The case $k=1$ is trivial. Now assume that the statement holds for $k-1$. We can assume that no set $A_i$ is empty, otherwise apply induction to the remaining sets. Consider a set $C$, which is initially empty. Add elements to $C$ one by one, from smallest to largest, until the first time some set $A_i$ satisfies $|A_i\cap C|\geq |A_i|/k$. W.l.o.g. we can assume $i=k$. We set $B_k=A_k\cap C$, and $A'_i=A_i\setminus C$ for all $1\leq i \leq k-1$. 

By construction there is no element of $B_k$ between two elements of $A'_i$, and vice versa. Also, since the sets $A_i$ are all disjoint, by the minimality of $C$ we have that $|A_i\cap C|<|A_i|/k$, and thus $|A'_i|> \frac{k-1}{k}|A_i|$. Applying the induction hypothesis to $A'_1, \dots, A'_{k-1}$ we obtain the desired sets $B_1,\dots, B_{k-1}$, which satisfy $|B_i|\geq |A'_i|/(k-1)\geq|A_i|/k$.\end{proof}

Finally, we will use a family of linear hypergraphs that satisfies a certain quasirandomness condition. Remember that a hypergraph is said to be \emph{linear} if every pair of edges intersects in at most one vertex.

\begin{definition} A sequence of $r$-graphs $ \{H_n\}_{n=1}^\infty$ is said to be \emph{nowhere-empty} if for every $c>0$ there exists $n_0$ with the following property: for all $n\geq n_0$, for all disjoint subsets $S_1, S_2, \dots, S_r$ of vertices, each with size at least $c|V(H_n)|$, there exists some edge of $H_n$ that intersects all sets $S_i$.\end{definition}

\begin{lemma}\label{lem:nowhere} Let $r\geq 3$. There exists a nowhere-empty sequence of linear $r$-graphs $\{H_n\}_{n=1}^\infty$, where each $H_n$ has $n$ vertices. \end{lemma}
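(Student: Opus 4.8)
The plan is to build such a sequence from an algebraic construction, taking advantage of the fact that linearity is a condition on pairs of edges while nowhere-emptiness is a ``spread-out'' condition that algebraic point sets are well suited to satisfy. Concretely, I would work over a finite field $\mathbb{F}_q$ and take $V(H_q)$ to be $\mathbb{F}_q$ (or a suitable subset), with edges given by the solution sets of a single polynomial equation, or by arithmetic-progression-type patterns. For instance, for $r=3$ one may take as edges all triples $\{x,y,z\}$ of distinct elements with $x+y+z=0$; more generally one wants an $r$-variable pattern whose solution set is linear and ``generic'' enough that any $r$ large sets are hit. Linearity would then be a short computation: fixing two coordinates of an edge determines the rest, so two edges sharing two vertices must coincide. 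Since we only need a sequence with $|V(H_n)|=n$ for every $n$ (not just prime powers), I would pass from $\mathbb{F}_q$ to an interval of integers of the right length, or work in $\mathbb{Z}/n\mathbb{Z}$ directly and only invoke the field structure heuristically — this bookkeeping is routine but must be handled.

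The key steps, in order, are: (1) fix the construction (vertex set and edge set) for each $n$ and verify it is an $r$-graph with $n$ vertices; (2) prove linearity, i.e.\ that no two edges share two vertices, which should reduce to the statement that the defining pattern is determined by any two of its coordinates; (3) prove the nowhere-empty property: given $c>0$, disjoint sets $S_1,\dots,S_r$ each of size $\geq cn$, exhibit an edge meeting all of them, for $n$ large; (4) assemble these into the claimed sequence. Step (3) is where the real content lies. The natural tool is a counting/Fourier-analytic estimate: show that the number of edges $\{v_1,\dots,v_r\}$ with $v_i\in S_i$ is at least (main term proportional to $\prod|S_i|/n$) minus an error term that is $o$ of the main term once $n$ is large relative to $1/c$. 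If the pattern is a single linear equation, this is a standard character-sum bound; if a more robust pattern is needed to get linearity simultaneously, one may instead use a combinatorial ``projection'' argument or a probabilistic deletion construction.

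An alternative, possibly cleaner, route is probabilistic: let each potential edge appear independently with probability $p=n^{-(r-2)}$ times a constant, so that the expected number of edges is $\Theta(n^2)$ and the expected number of ``bad'' pairs of edges (sharing two vertices) is $o(n^2)$; delete one edge from each bad pair to obtain a linear hypergraph, and then verify via a Chernoff/second-moment bound that every $r$-tuple of sets of size $cn$ still receives an edge with high probability (the expected count across any such tuple is $\Theta(c^r n^2)\to\infty$, and a union bound over the $\binom{n}{cn}^{r}$-type choices needs the deviation probability to beat it — this forces a careful choice of constants but goes through because the count is polynomially large in $n$ while the failure probability is exponentially small).

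I expect the main obstacle to be reconciling the two requirements with a \emph{single} construction: linearity pushes the edge set to be sparse and ``rigid'' (few edges through any pair), whereas nowhere-emptiness wants enough edges, well distributed, to hit every product of large sets. The probabilistic approach resolves this tension most transparently — sparsity makes bad pairs rare, while $\Theta(n^2)$ edges with independence makes the hitting property robust — so that is the version I would write up in detail, keeping the algebraic construction as a remark.
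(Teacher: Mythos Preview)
Your probabilistic route is exactly the approach the paper takes (random $r$-graph, delete one edge from each pair sharing two vertices, then a union bound over all choices of $S_1,\dots,S_r$), but your choice of $p$ is wrong and the error is not just cosmetic. With $p=\Theta(n^{-(r-2)})$ the expected number of intersecting pairs is $\Theta(n^{2r-2}p^2)=\Theta(n^2)$, not $o(n^2)$ as you claim. That means the deletion step removes $\Theta(n^2)$ edges globally, which is the \emph{same order} as the $\Theta(c^r n^2)$ edges you have inside any fixed $r$-tuple of sets of size $cn$. Since the nowhere-empty property must hold for every $c>0$ (with $n_0$ allowed to depend on $c$), and $c^r$ can be made arbitrarily small, you cannot conclude that every tuple retains an edge after deletion: the deleted edges could, a priori, be concentrated in some tuple with very small $c$. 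Your union-bound step only controls the \emph{pre-deletion} count, and does not say where the deleted edges sit.

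The paper fixes this by taking $p=n^{-r+3/2}$, so that the expected number of intersecting pairs is $n^{2r-2}p^2=n$, whence with high probability at most $n^{5/4}$ edges are deleted, while every $r$-tuple of sets of size $cn$ already contains $\Omega_c(n^{3/2})$ edges of the random hypergraph; thus the global deletion cannot empty any tuple once $n$ is large relative to $c$. Any exponent strictly between $-(r-1)$ and $-(r-2)$ would work similarly; the point is to make the total number of deletions $o$ of the minimum per-tuple count. With that adjustment your sketch is the paper's proof.

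As a side remark, your algebraic proposal does not extend cleanly: a single linear equation such as $\sum x_i=0$ gives an $r$-graph in which fixing two vertices still leaves $r-3$ free coordinates, so for $r\ge 4$ it is not linear. Getting linearity algebraically requires $r-2$ independent equations, and verifying nowhere-emptiness for such a system would need a genuine character-sum argument; the probabilistic construction is indeed the cleaner route.
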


\begin{proof} We will start by considering the Erd\H{o}s-R\'enyi random hypergraph $G_n=G^{(r)}(n,p)$, with $p=n^{-r+3/2}$. This is the random hypergraph obtained from a set of $n$ vertices, where for every $r$-tuple of vertices we add an edge with probability $p$, independently of the other choices. In the following, by ``with high probability'' we mean with probability tending to 1 as $n$ tends to infinity.

Let $T$ be the set of all edges of $G_n$ that intersect another edge in at least two vertices. It is clear that $G_n-T$ is a linear hypergraph. We claim that, with high probability, $|T|\leq n^{5/4}$. Indeed, let us count the number of pairs of edges intersecting in at least two vertices. Every such pair can be written in the form $(v_1, v_2, w_1, w_2, \dots, w_{r-2}, w'_1, w'_2, \dots, w'_{r-2})$, where the two edges are $v_1v_2w_1w_2\dots w_{r-2}$ and $v_1v_2w'_1w'_2\dots w'_{r-2}$. Such an expression is generally not unique: the order of the $v_i, w_i$ and $w'_i$ might be altered, and if the two edges intersect on more than two vertices then the choice of $v_1$ and $v_2$ is also not unique. What matters is that every pair of edges with intersection at least two admits such an expression. 

Given a $2r-2$-tuple of vertices, the probability that it corresponds in this way to a pair of edges of $G_n$ is at most $p^2$. Therefore, by linearity of expectation, the expected number of pairs of intersecting edges is at most $n^{2r-2}p^2=n$. By Markov's inequality, the probability that there are at least $n^{5/4}/2$ pairs of intersecting edges is at most $2n^{-1/4}$. Thus, with high probability $|T|\leq n^{5/4}$.

Next we will show that, with high probability, for every choice of disjoint $S_1, S_2, \dots, S_r$, each with size at least $n^{1-\frac{1}{6r}}$, there is an edge of $G_n-T$ that intersects each $S_i$. This is enough to prove our statement by taking $H_n=G_n-T$. Observe that there are at most $(r+1)^n$ choices of the subsets $S_i$: for each vertex, we can decide whether to include it in one of the sets $S_i$, and if so in which one, yielding $r+1$ options for each vertex. Therefore, it is enough to show that for each fixed choice of sets $S_i$, conditioned on $|T|\leq n^{5/4}$, the probability that there does not exist an edge of $G_n-T$ intersecting every $S_i$ is $o((r+1)^{-n})$. 

Indeed, the fact that no edge of $G_n-T$ intersects all $S_i$ implies that at most $|T|$ edges of $G_n$ intersect every $S_i$ (and all of them belong to $T$). In particular, if $|T|\leq n^{5/4}$, then no more than $n^{5/4}$ edges of $G_n$ intersect every $S_i$. Observe that the number of edges of $G_n$ that intersect every $S_i$ behaves like a binomial distribution: for each of the $|S_1|\cdot |S_2|\cdot\dots\cdot |S_r|$ choices of one vertex from each subset, whether or not this $r$-tuple forms an edge of $G_n$ is independent and happens with probability $p$. 

The probability that fewer than $n^{5/4}$ of these $r$-tuples become an edge of $G_n$ is at most \[\binom{(n^{1-r/6})^r}{n^{5/4}}(1-p)^{(n^{1-r/6})^r-n^{5/4}}=n^{O(n^{5/4})}e^{-\Omega(pn^{r-1/6})}=e^{-\Omega(n^{4/3})}=o\left((r+1)^{-n}\right).\]

With high probability, $|T|\leq n^{5/4}$ and all choices of $S_1, S_2, \dots, S_r$ span more than $n^{5/4}$ edges of $G_n$, so it spans at least one edge of $H_n$. This concludes our proof.\end{proof}

\section{Consistent orderings and descriptive sequences}\label{sec:descript}

We will now present some notions relating to hypergraphs, which will be necessary to describe the family of hypergraphs for which we will give the exact value of the uniform Tur\'an density.

Kohayakawa, Nagle, R\"odl and Schacht~\cite{KohNRS10} showed that the uniform Tur\'an density of every linear hypergraph is 0. Therefore, in order to keep edge intersections as simple as possible, we will define the following class of hypergraphs. We say that a hypergraph $H$ is \emph{quasi-linear} if for every edge $e\in E(H)$ there exists a unique other edge $f$ such that $|e\cap f|=2$, and all $f'\in E(H)\setminus\{e,f\}$ satisfy $|e\cap f'|\leq 1$. In other words, each edge has a unique \emph{twin}, twin pairs of edges have intersection size two, and non-twin pairs of edges have intersection size at most one.

We will consider orderings of the vertex set of $H$. We will be interested in the relative order of the vertices of $e\cup f$, for all twin pairs of edges $\{e,f\}$. The relative order of the vertices within an edge plays an important role important role in palettes, which are used to study the uniform Tur\'an density of $3$-graphs; the notions that we present here are inspired by palettes.

Given a set $S$ of $k$ elements, a pair of elements $\{u,v\}\in S$, an ordering $\preceq$ of $S$ and a pair of numbers $\{i,j\}\in\binom{[k]}{2}$, we say that $\{u,v\}$ play the role $\{i,j\}$ in $S$ if, when the elements of $S$ are sorted as $w_1\prec w_2\prec\dots\prec w_k$, we have $\{w_i,w_j\}=\{u,v\}$. We will use this notion most commonly in graphs with a vertex ordering, in order to describe the role of a pair of vertices within an edge.

We say that a quasi-linear hypergraph $H$ with a linear vertex order $\preceq$ is \emph{consistently ordered} if for every twin pair of edges $\{e,f\}$, the pair $e\cap f$ plays the same role in $e$ as in $f$. We say that $H$ is \emph{consistent} if there exists a vertex order on which it is consistently ordered, and we say that $H$ is \emph{inconsistent} otherwise.

We will introduce a way of describing the relative position of all vertices in a twin pair of edges in a vertex-ordered quasi-linear graph. A \emph{descriptive sequence} of order $k$ is a sequence $\sigma=(s_1, s_2, \dots, s_{2k-2})$ of $2k-2$ letters from $\{X,Y,Z\}$, containing $k-2$ letters $X$, $k-2$ letters $Y$ and two letters $Z$. We say that a descriptive sequence is \emph{consistent} if the pair of $Z$ entries plays the same role in the set of $k$ entries $\{X,Z\}$ as in the set of $k$ entries $\{Y,Z\}$. We say that the descriptive sequence is \emph{inconsistent} otherwise. For example, if $k=5$, the sequence $XYYXZZYX$ is consistent, because the pair of $Z$ entries play the role $\{3,4\}$ among both the $\{X,Z\}$ and the $\{Y,Z\}$ entries. On the other hand, the sequence $YXZXYXZY$ is inconsistent, since the pair of $Z$ entries plays the role $\{2,5\}$ among the $\{X,Z\}$ entries and the role $\{2,4\}$ among the $\{Y,Z\}$ entries.

Given two edges $e,f$ with $|e\cap f|=2$ in a vertex-ordered graph $H$, we say that the descriptive sequence $\sigma=(s_1, s_2, \dots, s_{2k-2})$ describes the pair $\{e,f\}$ if, when the vertices of $e\cup f$ are sorted as $w_1\prec w_2\prec\dots\prec w_{2k-2}$, the indices of the vertices of $e$ correspond to the indices of the entries $\{X,Z\}$ or the entries $\{Y,Z\}$, and the same holds for the indices of the vertices of $f$. In other words, one of the following situations hold:

\[s_i=\left\{\begin{array}{cc} X & \text{if }w_i\in e\setminus f\\ Y & \text{if }w_i\in f\setminus e\\ Z & \text{if }w_i\in e\cap f\end{array}\right.\quad\text{or}\quad\left\{\begin{array}{cc} X & \text{if }w_i\in f\setminus e\\ Y & \text{if }w_i\in e\setminus f\\ Z & \text{if }w_i\in e\cap f\end{array}\right.\]

We say that a quasi-linear hypergraph $H$ admits the descriptive sequence $\sigma$ if there exists a linear order of its vertex set in which every twin pair of edges $\{e,f\}$ is described by $\sigma$.

A particularly interesting descriptive sequence is $XX\dots XZZYY\dots Y$. Given a graph with an ordered vertex set, we will call the first two vertices of an edge its \emph{head}, and the last two vertices its \emph{tail}. Observe that here the words \emph{head} and \emph{tail} each refers to a pair of vertices, rather than a single vertex. Then the sequence $XX\dots XZZYY\dots Y$ describes a pair of edges if and only if the head of one edge is the tail of the other.

We say that a hypergraph is \emph{head-tail-mixing} if for every linear order of its vertex set there exist edges $e,f$ such that the head of $e$ is the tail of $f$.

We now have all the required definitions to characterize the families of hypergraphs in which our theorems apply

\begin{theorem}\label{thm:dens14} For $r\geq 3$ every quasi-linear, head-tail-mixing $r$-graph $F$ which admits the descriptive sequence $XX\dots XZZYY\dots Y$ satisfies $\pi_u(F)=1/4$.\end{theorem}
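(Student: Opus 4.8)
The plan is to prove Theorem~\ref{thm:dens14} by separately establishing the lower bound $\pi_u(F)\geq 1/4$ and the upper bound $\pi_u(F)\leq 1/4$ for every $F$ in the stated family. The two bounds have completely different flavors: the lower bound is a construction (producing a locally $(1/4)$-dense sequence of $F$-free $r$-graphs), while the upper bound is an extremal/structural argument (showing that any locally $d$-dense sequence with $d>1/4$ must eventually contain $F$).

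For the lower bound I would build an explicit palette-style / random-type construction. The natural idea, generalizing Glebov--Kr\'al'--Volec, is the following: take the nowhere-empty sequence of linear $r$-graphs $\{H_n\}$ from Lemma~\ref{lem:nowhere} on vertex sets that we may assume are linearly ordered, and then ``blow up'' each edge by a random two-coloring so that an $r$-set becomes an edge of the new hypergraph depending on the colors assigned to its \emph{head pair} and \emph{tail pair}. Concretely, one assigns each ordered pair of vertices a uniformly random bit, and declares an $r$-tuple $v_1\prec\dots\prec v_r$ (which is a potential edge coming from $H_n$) to be an edge exactly when the bit of $\{v_1,v_2\}$ differs from the bit of $\{v_{r-1},v_r\}$; this gives local density $1/4$ with high probability via a standard second-moment / concentration argument over all linear-sized vertex subsets. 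The key point is that such a hypergraph is $F$-free: because $F$ is quasi-linear and admits the descriptive sequence $XX\dots XZZYY\dots Y$, any copy of $F$ would force a twin pair $\{e,f\}$ where the head of $e$ equals the tail of $f$, so their shared bit would have to simultaneously satisfy the ``differ'' condition for both $e$ and $f$ in a contradictory way — but one has to check this globally across all twin pairs of $F$, using the linearity of $H_n$ to control how the edges of $F$ embed. I would phrase this carefully: a copy of $F$ induces a vertex order on $V(F)$, and the head-tail-mixing hypothesis is exactly what guarantees that \emph{regardless} of that induced order some twin pair is in head-tail position, which is what makes the construction rule out $F$.

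For the upper bound $\pi_u(F)\leq 1/4$ I would use the reduced-hypergraph and hypergraph-regularity machinery promised in Sections~\ref{sec:reduced} and~\ref{sec:reg}. The scheme is: suppose $\{G_n\}$ is locally $d$-dense with $d>1/4$ and every $G_n$ is $F$-free; apply the regularity method to obtain a reduced hypergraph whose densities reflect $d$, and then find inside it the structure needed to embed $F$. Since $F$ is quasi-linear with all twin pairs described by $XX\dots XZZYY\dots Y$, building a copy of $F$ reduces to finding, for each twin pair, an appropriate configuration of two edges sharing a head-tail pair, plus disjointness/linearity of the remaining vertices, which should follow from a Ramsey-type argument (Ramsey's theorem, Lemma~\ref{lem:FisGra}, Lemma~\ref{lem:IMO}) applied to the reduced hypergraph to clean up the order structure, together with the density threshold $1/4$ which is exactly the point at which one can guarantee a ``differing-pair'' pattern no matter how the colors/link-densities are arranged. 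The head-tail-mixing hypothesis again enters: whatever consistent ordering the reduced structure imposes, it cannot avoid having the required head-tail coincidence, so $F$ cannot be avoided once $d>1/4$.

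The main obstacle, I expect, is the upper bound and specifically the bookkeeping in the regularity/reduced-hypergraph step: one must embed a fixed quasi-linear $r$-graph $F$ with prescribed twin-pair order patterns into a reduced structure that only controls densities on the lower layers of the regularity partition, and for $r\geq 4$ there are several non-trivial layers to handle simultaneously (this is precisely the difficulty the introduction flags). Turning the combinatorial statement ``head-tail-mixing $+$ admits $XX\dots XZZYY\dots Y$'' into a clean embedding lemma, and verifying that $1/4$ is the exact threshold rather than merely an upper estimate, is where the real work lies; the lower-bound construction, by contrast, should be a relatively routine adaptation of the known $r=3$ and $r=4$ cases once the $F$-freeness is argued via the descriptive sequence.
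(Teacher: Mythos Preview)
Your proposal has the right two-part structure (construction for $\geq 1/4$, regularity/reduced-hypergraph argument for $\leq 1/4$), but there are genuine errors in both halves.

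\textbf{Lower bound.} Your construction does not work. First, restricting to $r$-tuples that are edges of a nowhere-empty \emph{linear} $r$-graph $H_n$ leaves only $O(n^2)$ potential edges, so the resulting hypergraph has density $O(n^{2-r})$, not $1/4$; Lemma~\ref{lem:nowhere} plays no role in this theorem (it is used only in the existence theorems). Second, and more seriously, the symmetric rule ``head bit $\neq$ tail bit'' does not give $F$-freeness: if $\{u,v\}$ is the head of $e$ and the tail of $f$, the two ``differ'' constraints are perfectly compatible (e.g.\ bit$(\{u,v\})=0$, bit of the tail of $e$ and of the head of $f$ both $1$). What the paper does is the asymmetric rule ``head red \emph{and} tail blue'': then a pair that is simultaneously a head and a tail must be both red and blue, which is the contradiction. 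Note also that with your ``differ'' rule each $r$-tuple is an edge with probability $1/2$, not $1/4$. Finally, the hypothesis ``$F$ admits $XX\cdots XZZYY\cdots Y$'' plays no role in the lower bound; $F$-freeness follows solely from $F$ being head-tail-mixing.

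\textbf{Upper bound.} Your outline (regularity $\to$ reduced hypergraph $\to$ embedding via counting lemma) is correct in spirit, but two points are off. The hypothesis ``head-tail-mixing'' does \emph{not} enter the upper bound at all; what is used is that $F$ \emph{admits} the descriptive sequence, which fixes one good ordering of $V(F)$ for the embedding. And Lemmas~\ref{lem:FisGra} and~\ref{lem:IMO} are not used here (they belong to the existence proofs); the key structural step is Lemma~\ref{lem:red14}, which says that in any $(k,r)$-reduced hypergraph of density $>1/4$ one can find many indices every $(2r-2)$-tuple of which admits $XX\cdots XZZYY\cdots Y$. That lemma, together with the regularity machinery and the hypergraph counting lemma, is what turns the ``admits'' hypothesis into a copy of $F$.
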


\begin{theorem}\label{thm:denspi} For $r\geq 3$ every quasi-linear, inconsistent $r$-graph $F$ which admits all inconsistent descriptive sequences of order $r$ satisfies $\pi_u(F)=\pi_r$.\end{theorem}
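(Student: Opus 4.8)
The plan is to prove the two inequalities $\pi_u(F)\ge\pi_r$ and $\pi_u(F)\le\pi_r$ separately: the lower bound by an explicit random construction, and the upper bound through the reduced-hypergraph and hypergraph-regularity machinery introduced in Sections~\ref{sec:reduced} and~\ref{sec:reg}.

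\medskip
\noindent\emph{Lower bound.} Fix a bijection $\psi\colon\binom{[r]}{2}\to\{1,\dots,\binom{r}{2}\}$. On the vertex set $[n]$ with its natural order, colour each pair $i<j$ independently and uniformly with a colour $c(i,j)\in\{1,\dots,\binom{r}{2}\}$, and declare an $r$-set $v_1<\dots<v_r$ to be an edge precisely when $c(v_a,v_b)=\psi(\{a,b\})$ for all $a<b$. Every $r$-set is an edge with probability exactly $\binom{r}{2}^{-\binom{r}{2}}=\pi_r$; since changing the colour of a single pair alters the number of edges inside a set of size $m$ by $O(m^{r-2})$, Azuma's inequality together with a union bound over all subsets shows that for a suitable $\varepsilon_n\to0$, with high probability every $S\subseteq[n]$ with $|S|\ge\varepsilon_n n$ spans at least $(\pi_r-\varepsilon_n)\binom{|S|}{r}$ edges, giving a locally $\pi_r$-dense sequence. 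To see that it is $F$-free, note that if $e$ and $f$ are edges with $|e\cap f|=2$, then the pair $e\cap f$ carries a single colour $\gamma$, and in either edge this pair must occupy the positions $\psi^{-1}(\gamma)$; hence $e\cap f$ plays the same role in $e$ as in $f$. Thus every quasi-linear subgraph is consistently ordered with respect to the natural order of $[n]$, so the inconsistent hypergraph $F$ does not embed.

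\medskip
\noindent\emph{Upper bound.} Let $\{H_n\}$ be locally $d$-dense with $d>\pi_r$; the aim is to find $F\subseteq H_n$ for $n$ large. Hypergraph regularity and the passage to the associated reduced hypergraph reduce this to a combinatorial statement: in a reduced hypergraph on an arbitrarily large index set in which at least a $d$-fraction of all constellations are edges, one must locate a configuration from which a copy of $F$ can be pulled back. Since $F$ is quasi-linear, two edges of $F$ share at most two classes, so the only overlap between the constellations of distinct edges of $F$ is in the colours of pairs of classes; this confines the analysis to the pair layer. First I would apply Ramsey's theorem to pass to a large sub-index-set on which the palettes on pairs and the edge relation are canonical. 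The key combinatorial claim is then that a canonical structure which is \emph{consistently ordered}---so that each occurring (pair of classes, colour) is assigned a fixed role---has density at most $\pi_r$: the colour sets associated with the $\binom{r}{2}$ possible roles are pairwise disjoint, and by the AM--GM inequality the product of their sizes divided by the $\binom{r}{2}$-th power of the palette size is at most $\binom{r}{2}^{-\binom{r}{2}}=\pi_r$. As $d>\pi_r$, the structure cannot be consistently ordered, so it contains two edges realising some \emph{inconsistent} descriptive sequence $\sigma$ of order $r$. I would then amplify this single inconsistency into a homogeneous sub-structure in which every twin pair is described by $\sigma$, using Lemma~\ref{lem:FisGra} to sort the relevant tuples of classes so that the realised descriptive sequences become uniform, Lemma~\ref{lem:IMO} to place the classes used by different twin pairs of $F$ in disjoint intervals, and Lemma~\ref{lem:nowhere} to embed the linear skeleton of $F$. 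Since $F$ admits every inconsistent descriptive sequence of order $r$, it admits $\sigma$; the vertex order witnessing this embeds $F$ into the sub-structure, hence into $H_n$.

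\medskip
\noindent\emph{Main obstacle.} The bulk of the difficulty lies in the upper bound. The combinatorial core---that $\pi_r$ is exactly the largest density of a consistently ordered canonical reduced hypergraph---must be set up with the correct quantifiers: consistency constrains only pairs of edges meeting in two classes, and it is the richness of the reduced hypergraph (every tuple of classes appearing with every choice of colours) that forces the AM--GM bound to bite, so deriving ``an inconsistent configuration exists'' from ``density $>\pi_r$'' requires care, the more so when $r\ge4$ and several layers of the regularity partition are simultaneously active. The other demanding step is converting one inconsistent configuration into a full copy of $F$: this needs a single global vertex order together with compatible vertex classes satisfying \emph{all} twin pairs of $F$ at once, and although quasi-linearity keeps the twin pairs nearly independent, coordinating the sorting, the interval-separation and the nowhere-empty-embedding arguments so that they do not interfere is where most of the technical work will be.
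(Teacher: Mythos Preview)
Your lower bound is essentially the paper's construction, and the argument that the host is $F$-free because every pair of vertices has a single fixed role is exactly right.

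The upper bound, however, has a genuine gap in its final stage. You correctly identify the combinatorial heart of the matter: the AM--GM inequality pins the critical density of a ``consistently ordered'' reduced structure at $\pi_r$, so above $\pi_r$ an inconsistent descriptive sequence must appear somewhere. But locating \emph{one} inconsistent pair of reduced-hypergraph edges is not enough. To pull back a copy of $F$ via the counting lemma you need, for every twin pair of $F$ placed on a $(2r-2)$-tuple of index classes, two intersecting constituent edges realising the \emph{same} inconsistent sequence $\sigma$; that is, every relevant $(2r-2)$-tuple must admit $\sigma$. Your proposed ``amplification'' invokes Lemmas~\ref{lem:FisGra}, \ref{lem:IMO} and~\ref{lem:nowhere}, but these lemmas play no role in the density theorem at all---they are the tools used in Theorem~\ref{thm:existpi} to \emph{construct} a hypergraph $F$ satisfying the hypotheses, not to embed a given $F$ inside a dense host. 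In particular, Lemma~\ref{lem:nowhere} manufactures a nowhere-empty linear hypergraph from scratch; it says nothing about the reduced hypergraph you are analysing, and there is no ``linear skeleton of $F$'' to embed on the host side.

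The paper's route reverses your order of operations. It first applies Ramsey to the $(2r-2)$-tuples of indices, coloured by which inconsistent sequence (if any) they admit, to obtain a large index set on which either all $(2r-2)$-tuples admit a common inconsistent $\sigma$ or none admits any; only \emph{then} is the profile-plus-AM--GM argument run, to rule out the ``none'' case. This is the content of Lemma~\ref{lem:redpi}. Once that lemma hands you $|V(F)|$ indices on which every $(2r-2)$-tuple admits the same inconsistent $\sigma$, the hypothesis that $F$ admits $\sigma$ supplies a vertex ordering of $F$, and the counting lemma finishes the embedding directly---no sorting, interval-separation, or nowhere-empty step is needed.
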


Of course, in order to be able to say that $1/4$ and $\pi_r$ are uniform Tur\'an density values we need the corresponding existence theorems:

\begin{theorem}\label{thm:exist14} For every $r\geq 3$ there exists a quasi-linear, head-tail-mixing $r$-graph $F$ which admits the descriptive sequence $XX\dots XZZYY\dots Y$,.\end{theorem}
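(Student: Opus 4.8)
The plan is to construct $F$ explicitly as a "blow-up" of the three-points configuration $\points$, mimicking the $3$-uniform constructions used by Glebov, Kr\'al' and Volec. The three vertices of $\points$ correspond to the head, the shared pair, and the tail; we want every edge of $F$ to occupy a "head" pair, a "middle" part, and a "tail" pair, so that $F$ has the descriptive sequence $XX\dots XZZYY\dots Y$, is quasi-linear, and is head-tail-mixing. Concretely, I would take $F$ to consist of a suitable collection of twin pairs of edges $\{e,f\}$ where $e\cap f$ is a pair of vertices that is simultaneously the tail of $e$ and the head of $f$ (in whichever vertex order we hope to use), padded with $r-2$ extra "private" vertices in each of $e\setminus f$ and $f\setminus e$ to reach uniformity $r$; the private vertices are placed at the extremes (the $X$'s before the $ZZ$ and the $Y$'s after). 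Thus each twin pair is, combinatorially, a "tight path of length $2$ on pairs": $A \to P \to B$, where $A,B,P$ are disjoint pairs and the two edges are $A\cup(\text{padding})\cup P$ and $P\cup(\text{padding})\cup B$.

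The key steps, in order: (1) Fix the abstract vertex set and the list of twin pairs so that the "pair-level" structure is a directed graph on pairs which contains a directed cycle through every pair (or at least is highly connected), guaranteeing head-tail-mixing; the simplest choice is to let the pairs $P_1,\dots,P_m$ form a directed cycle $P_1\to P_2\to\dots\to P_m\to P_1$, with one twin pair of edges realizing each arc, and with all private (padding) vertices distinct across arcs. (2) Verify quasi-linearity: by construction the only way two edges share two vertices is if they form one of the designated twin pairs; here one uses that the shared pairs $P_i$ are distinct, that private vertices are never reused, and that an edge $P_i \cup \text{padding} \cup P_{i+1}$ meets another edge $P_j\cup\text{padding}\cup P_{j+1}$ in at most one vertex when $\{i,i+1\}\cap\{j,j+1\}$ has size $\le 1$ — a finite case check. (3) Verify that $F$ admits the descriptive sequence: exhibit a single linear order of $V(F)$ in which, for every twin pair $\{e,f\}$, the $r-2$ private vertices of one side all precede the shared pair and the $r-2$ of the other side all follow it; since the pairs form a directed cycle, this is a matter of ordering the "blocks" consistently, e.g. privates-of-arc-$i$, then $P_i$, interleaved appropriately — one has to choose the order so that $P_i$ lies between $e\setminus f$ and $f\setminus e$ for the arc into $P_i$ and also for the arc out of $P_i$, which is why a cyclic (rather than linear) pair structure, broken at one point, works. (4) Verify head-tail-mixing: in any linear order of $V(F)$, consider the $m$ pairs $P_1,\dots,P_m$; in that order each $P_i$ has a "position interval," and since the arcs close up into a cycle, a pigeonhole/rotation argument forces some arc $P_i\to P_{i+1}$ for which $P_i$ is exactly the head of $e$ and $P_{i+1}$ exactly the tail of $f$ — here we need $m$ large enough and the padding flexible enough that "head'' and "tail'' can be forced regardless of where the private vertices fall, which is the crux.

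The main obstacle I expect is step~(4), making $F$ genuinely head-tail-mixing: it is easy to build an $F$ that \emph{admits} the sequence $XX\dots XZZYY\dots Y$ in one convenient order, but head-tail-mixing requires that \emph{every} order produce a head-of-one-equals-tail-of-another pattern, and an adversary reordering the vertices can try to scramble which two vertices of an edge are its first two. The remedy, following the $3$-graph case, is to use many twin pairs arranged in a cyclic "pair-tournament" and enough private vertices so that for any ordering, on each edge the two vertices of whichever designated pair plays the role $\{1,2\}$ are forced — one shows that if no twin pair ever has its shared pair acting as head of one edge and tail of the other, then the directed cycle on pairs would have to be "consistently oriented" with respect to the ordering, which is impossible for a cycle. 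Fleshing out this contradiction (and choosing $m$ and the padding sizes to make it airtight for all $r\ge 3$) is where the real work lies; steps (1)–(3) are bookkeeping. A secondary, purely technical point is keeping the padding vertices organized so that quasi-linearity in step~(2) is not broken by two padded edges accidentally sharing two private vertices — handled by making every private vertex appear in exactly one edge.
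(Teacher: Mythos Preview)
Your proposal has a genuine gap at the step you correctly flag as the crux, step~(4). The suggested ``pigeonhole / consistent-orientation'' argument presupposes that, in an arbitrary vertex order, the shared pair $e\cap f$ of each twin pair must land either as the head of $e$ and tail of $f$ or as the head of $f$ and tail of $e$; only then would a directed cycle of twin pairs force a contradiction. But the adversary has many more options: the shared pair can be the head of \emph{both} edges, the tail of both, or occupy interior positions in both. Concretely, in your cycle construction the shared pairs $S_i=e_i\cap f_i$ are, by design, disjoint from one another and from all connecting and padding vertices. An adversary who places every vertex of $\bigcup_i S_i$ before every other vertex of $F$ makes each $S_i$ the first two vertices of both $e_i$ and $f_i$; then $S_i$ is the head of both edges and the tail of neither, so no twin pair witnesses head-tail-mixing. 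This defeats the construction for every $m$ and every $r\ge 3$. (There is also a quasi-linearity issue under the reading in which the $P_i$ are genuine pairs reused between adjacent arcs: then $e_i$ and $f_{i-1}$ already meet in the two vertices of $P_i$, giving $e_i$ two distinct twins.)

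The paper takes a completely different route that sidesteps this adversary. It starts from a nowhere-empty sequence of \emph{linear} $(2r-2)$-graphs $H'_n$ on vertex set $[n]$ (Lemma~\ref{lem:nowhere}) and replaces each edge $v_1<\dots<v_{2r-2}$ of $H'_n$ by the twin pair $\{v_1,\dots,v_r\},\{v_{r-1},\dots,v_{2r-2}\}$. Linearity of $H'_n$ yields quasi-linearity immediately, and the natural order on $[n]$ witnesses the descriptive sequence $X\cdots XZZY\cdots Y$. For head-tail-mixing, given an arbitrary order $\preceq$, the paper partitions $[n]$ into $(2r-2)^2$ equal intervals, applies Lemma~\ref{lem:IMO} to extract large subsets $B_i$ that are pairwise separated by $\preceq$, uses Erd\H os--Szekeres to find $2r-2$ of these blocks that are $\preceq$-monotone with respect to their natural indices, and then invokes nowhere-emptiness to produce a $(2r-2)$-edge of $H'_n$ with one vertex in each block. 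On those $2r-2$ vertices $\preceq$ agrees with the natural order or its reverse, so the associated twin pair is described by $X\cdots XZZY\cdots Y$ under $\preceq$. The idea you are missing is density: one needs enough twin pairs, spread throughout the vertex set, that no single ordering can misalign all of them simultaneously; a cycle of $m$ twin pairs with distinguished shared pairs is far too sparse for this, as the ``shared vertices first'' ordering shows.
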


\begin{theorem}\label{thm:existpi}For every $r\geq 3$ there exists a quasi-linear, inconsistent $r$-graph $F$ which admits all inconsistent descriptive sequences of order $r$.\end{theorem}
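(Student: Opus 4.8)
The plan is to construct $F$ explicitly from a carefully chosen "ground" hypergraph together with all of its re-orderings, so that the resulting hypergraph is forced to realize every inconsistent descriptive sequence no matter how its vertices are linearly ordered. Concretely, I would first fix a single quasi-linear $r$-graph $F_0$ consisting of exactly one twin pair $\{e,f\}$ with $|e\cap f|=2$ (so $F_0$ has $2r-2$ vertices, two edges, and is trivially quasi-linear). For each inconsistent descriptive sequence $\sigma$ of order $r$ one has a canonical labelled copy $F_\sigma$ of $F_0$ on a vertex set $V_\sigma$ equipped with a linear order realizing $\sigma$. The hypergraph $F$ will be the disjoint union $\bigsqcup_\sigma F_\sigma$ over all (finitely many) inconsistent $\sigma$, after which I must argue that \emph{every} linear order of $V(F)$ exhibits all inconsistent $\sigma$ simultaneously. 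This last step is the crux and will not follow from the disjoint-union structure alone, since an adversarial global order can scramble each block; the real work is to enlarge the construction so that robustness is guaranteed.

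To get that robustness I would use a Ramsey-type padding. Replace each block $F_\sigma$ by many vertex-disjoint copies, or better, blow up a single "universal" gadget: take a large ground set $W$, and for a carefully designed family of $(2r-2)$-subsets of $W$ put a twin pair on each, with the incidence pattern of which pairs are twins chosen so that (a) the whole hypergraph stays quasi-linear — each $(r)$-edge meets exactly one other edge in two vertices and every other pair of edges in at most one vertex — and (b) for \emph{every} linear order $\prec$ of $W$ one can find, for each inconsistent $\sigma$, a twin pair whose $2r-2$ vertices are ordered by $\prec$ exactly as $\sigma$ prescribes. For (b) the natural tool is Lemma~\ref{lem:FisGra}: given $\prec$ on $[N]^{2r-2}$ (or a suitably structured product of coordinate-sets inside $W$) we extract product sets $S_1\times\cdots\times S_{2r-2}$ that are "sorted by layers", which pins down enough of the relative order to let us select, inside that product, a $(2r-2)$-tuple whose $\prec$-order pattern is any prescribed permutation — in particular any given inconsistent $\sigma$. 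Thus if $W$ is taken to be (a large chunk of) such a product grid and we place twin pairs on an appropriate collection of transversals, the unavoidability of $\sigma$ becomes a consequence of Fishburn--Graham.

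The main obstacle is reconciling (a) and (b): placing enough twin pairs to force every inconsistent $\sigma$ while keeping the hypergraph quasi-linear. If two twin pairs share three or more vertices, or if a pair of edges from different twin pairs meets in two vertices, quasi-linearity fails. I would handle this by spacing out the twin pairs: use Lemma~\ref{lem:IMO} (disjoint-interval selection) or an explicit algebraic construction — e.g.\ index vertices by elements of $\ZZ_p^m$ and let each edge be an arithmetic-progression-like set, so that two distinct edges meet in at most one point automatically, and then designate twins by a perfect matching on the edge set that pairs edges sharing a prescribed two-vertex "core". A cleaner alternative, and probably the one I would write up, is to keep $F$ as the disjoint union of the blocks $F_\sigma$ but interleave each block's vertices across a common sorted-by-layers product grid using Lemma~\ref{lem:FisGra}, so that the single global order, restricted to the grid coordinates, already forces each block to display its sequence; disjointness of the blocks then trivially preserves quasi-linearity. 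I expect the verification that "admits $\sigma$ for every order $\prec$" reduces, via Lemma~\ref{lem:FisGra} and a short case analysis on which coordinate is the sorting layer, to checking that the prescribed order pattern of $\sigma$ can be embedded in any layer-sorted product — which is where the finitely-many-sequences hypothesis and the freedom to take $N=N(k,d)$ large are used.
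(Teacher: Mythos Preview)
There is a genuine gap stemming from a misreading of the two conditions. ``Admits $\sigma$'' is existential in the order: there must be \emph{one} linear order of $V(F)$ under which \emph{every} twin pair is described by $\sigma$. ``Inconsistent'' is universal: under \emph{every} order, \emph{some} twin pair is described by an inconsistent sequence. Your target (b) --- that every order realises every inconsistent $\sigma$ on some twin pair --- is neither of these; worse, it is actually incompatible with ``admits $\sigma$'', since in the order witnessing $\sigma$ each twin pair is described only by $\sigma$ (and its $X\leftrightarrow Y$ swap), so no other $\sigma'$ appears. For $r\ge 3$ there are several swap-inequivalent inconsistent sequences, so no $F$ satisfying the theorem can also satisfy (b). Your diagnosis of the disjoint union of single twin pairs is correspondingly backwards: that hypergraph \emph{does} admit every $\sigma$ (order each block to realise $\sigma$, then concatenate), but it is \emph{consistent} (order each block to realise a consistent sequence such as $ZZX\cdots XY\cdots Y$). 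Disjoint twin pairs give the adversary complete freedom on each block, and no interleaving repairs this.

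What is missing is a mechanism forcing many twin pairs to realise each $\sigma_i$ \emph{simultaneously} under a single global order, while still leaving so many twin pairs that inconsistency is unavoidable. In the paper this is done by taking the vertex set to be $[m]^d$ (with $d$ the number of inconsistent sequences), so that sorting by the $i$-th coordinate witnesses ``admits $\sigma_i$'' for all twin pairs at once. The twin pairs themselves come from a nowhere-empty linear $(2r-2)$-graph on $[m]^d$ (Lemma~\ref{lem:nowhere}), splitting only those edges that can be labelled compatibly with all $d$ coordinate orders simultaneously. Inconsistency is then proved by combining Lemmas~\ref{lem:IMO} and~\ref{lem:FisGra} with the nowhere-empty property: given an arbitrary order $\prec$, one locates a product sub-grid sorted by some coordinate, and nowhere-emptiness supplies a twin pair inside it, which is then described by that coordinate's $\sigma_i$ or its reverse. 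The nowhere-empty hypergraph, and the use of coordinate orders to certify ``admits $\sigma_i$'', are the essential ingredients your proposal lacks.
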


These four theorems together directly imply Theorem~\ref{thm:main14} and Theorem~\ref{thm:mainpi}.

\section{Proof of existence theorems}\label{sec:exist}

In this section we will prove Theorems~\ref{thm:exist14} and~\ref{thm:existpi}. The proofs of both theorems follow a similar structure, but the former is simpler, since at a certain step it uses the Erd\H{o}s-Szekeres theorem while the proof of Theorem~\ref{thm:existpi} uses Lemma~\ref{lem:FisGra}. In both cases the construction of the $r$-graph is obtained by starting with the sequence of nowhere-empty $2r-2$-graphs from Lemma~\ref{lem:nowhere}, and subdividing some or all edges into two edges of size $r$ intersecting in two vertices. This process is once again simpler in the proof of Theorem~\ref{thm:exist14} than Theorem~\ref{thm:existpi}.

\begin{proof}[Proof of Theorem~\ref{thm:exist14}] Let $\{H'_n\}$ be a nowhere-empty sequence of $2r-2$-graphs, whose existence is guaranteed by Lemma~\ref{lem:nowhere}. We can assume that the vertex set of $H'_n$ is $[n]$. Construct the $r$-graph $H_n$ from $H'_n$ using the following procedure. The vertex set is $[n]$. For every edge $e\in E(H'_n)$, if $v_1<v_2<\dots<v_{2r-2}$ are the vertices of $e$, then add to $H_n$ the two edges $v_1v_2\dots v_r$ and $v_{r-1}v_r\dots v_{2r-2}$.

Since the hypergraph $H'_n$ is linear, the only pairs of edges of $H_n$ that intersect in two vertices are those that are generated by the same edge of $H'_n$, otherwise the intersection size is at most 1. Thus $H_n$ is quasi-linear, and the twin pairs are precisely the pairs of edges originating from the same edge of $H'_n$. Moreover, in the natural order of $[n]$, in every twin pair of edges, the head of one edge is the tail of the other, therefore $H_n$ admits the descriptive sequence $XX\dots XZZYY\dots Y$. To conclude the proof of Theorem~\ref{thm:exist14}, we will prove that for some value of $n$ the hypergraph $H_n$ is head-tail-mixing.

Let $t=2r-2$, and let $n$ be a large enough multiple of $t^2$. Let $\preceq$ be a linear order on $[n]=V(H_n)$. Our goal is to show that, when sorting the vertices by $\preceq$, there exist two edges $e,f$ such that the head of $e$ is the tail of $f$. 

Divide $[n]$ into $t^2$ intervals of equal length, denoted by $I_1, I_2, \dots, I_{t^2}$. We apply Lemma~\ref{lem:IMO}, with $k=t^2$ and $A_i=I_i$, to find subsets $B_i\subset I_i$, each with size at least $|I_i|/t^2=n/t^4$, such that for $i\neq j$ there is no element of $B_i$ which lies between two elements of $B_j$ in the order $\preceq$. That means that if for some $x\in B_i$ and $y\in B_j$ we have $x\prec y$, then this holds for all choices of $x$ and $y$ within the sets $B_i$ and $B_j$. In this case we write $B_i\prec B_j$.

Consider the ordering that $\preceq$ induces on the sequence $B_1, B_2, \dots, B_{t^2}$. By the Erd\H{o}s-Szekeres theorem, there is a monotone subsequence $B_{i_1}, B_{i_2}, \dots, B_{i_t}$ of length $t$. On every set consisting of one vertex from each $B_{i_j}$, the order $\preceq$ is monotone.

Since the family of hypergraphs $H'_n$ is nowhere-empty, for $n$ large enough there exists an edge $e'\in E(H'_n)$ with one vertex in each set $B_{i_j}$. Consider the two edges $e,f\in E(H_n)$ that were generated from $e'$. In the natural order of the integers, w.l.o.g. the head of $e$ is the tail of $f$. But because the $\preceq$ order is monotone in $e'$, the order of its vertices in $\preceq$ is either the same as in the natural order or its reverse. In either case, the head of one of $e,f$ is the tail of the other, concluding the proof that $H_n$ is head-tail-mixing. \end{proof}

In the case of Theorem~\ref{thm:existpi}, there are some changes with respect to the proof above. In order to account for the many different vertex orders that yield each of the inconsistent descriptive sequences, we will consider that the vertex set of our hypergraph forms a multidimensional grid, and that each descriptive sequence is obtained by sorting the vertices by one of its coordinates. This multidimensionality of the vertex set will mean that, instead of the relatively simple Erd\H{o}s-Szekeres theorem, we will use Lemma~\ref{lem:FisGra}. In addition, not all edges of $H'_n$ will become two edges of $H_n$. The reason for this is that, if we do split all edges, it might not be possible for the new twin pair of edges to be described by the correct descriptive sequence when sorting by each of the coordinates.

\begin{proof}[Proof of Theorem~\ref{thm:existpi}]
Let $t=2r-2$. Let $d$ be the number of inconsistent descriptive sequences of order $r$, and let $\sigma_1, \sigma_2, \dots, \sigma_d$ be a list containing each of them. Set $N=N(t,d)$ as in Lemma~\ref{lem:FisGra}. We set $m$ to be a large enough integer divisible by $N$. Finally, set $n=m^d$.

Consider the nowhere-empty sequence of linear $t$-graphs $\{H'_i\}_{i=1}^\infty$. We identify the vertices of $H'_n$ with the elements of $[m]^d$. We will form $H_n$ from $H'_n$ by turning some (but not all) edges from $H'_n$ into twin pairs of edges in $H_n$. We now explain the procedure for deciding whether to split an edge and how to do so.

Let $e$ be an edge of $H'_n$. If there exists a value $i\in[d]$ such that two vertices of $e$ have the same value of their $i$-th coordinate, discard this edge. Therefore, we will assume that for every $i$, the $i$-th coordinate of all vertices of $e$ is different.

Remember that each descriptive seqence $\sigma_i$ is a sequence of $t=2r-2$ symbols, out of which $r-2$ are $X$, $r-2$ are $Y$ and two are $Z$. Suppose that it is possible to label $r-2$ vertices of $e$ by $X$, $r-2$ vertices by $Y$ and two by $Z$ in such a way that, for every $i\in [d]$, when the vertices of $e$ are sorted by their $i$-th coordinate, the resulting label sequence is precisely $\sigma_i$. In that case, add to $H_n$ two edges, one by taking the vertices labeled $X$ and $Z$, and the other by taking the vertices labeled $Y$ and $Z$. If no such labeling exists, discard this edge.

Since the hypergraph $H'_n$ is linear, the only pairs of edges of $H_n$ that intersect in two vertices are those that are generated by the same edge of $H'_n$, otherwise the intersection size is at most 1. Thus $H_n$ is quasi-linear, and the twin pairs of edges are precisely the pairs of edges originating from the same edge of $H'_n$. Moreover, for every inconsistent desctiptive sequence $\sigma_i$, when the vertices of $H_n$ are sorted by their $i$-th coordinate, every twin pair of edges is described by $\sigma_i$, meaning that $H_n$ admits all inconsistent descriptive sequences. To conclude the proof of Theorem~\ref{thm:existpi}, we will show that for some $n$ the hypergraph $H_n$ is inconsistent.

Let $\preceq$ be a linear order on the vertices of $H_n$. Our goal is to show that there exist edges $e$ and $f$, with intersection size two, such that $e\cap f$ plays a different role in $e$ and in $f$. 

Subdivide $[m]$ into $N$ equal intervals, denoted $I_1, \dots, I_N$ from smallest to largest. For each vector $\vec a=(a^1, \dots, a^d)\in [N]^d$, we can write $I_{\vec a}=I_{a^1}\times\dots\times I_{a^d}$. These sets $I_{\vec a}$ form a partition of the vertex set of $H'_n$, and each has size $(m/N)^d=n/N^d$.

We apply Lemma~\ref{lem:IMO} to the $N^d$ sets $I_{\vec a}$ to find subsets $B_{\vec a}\subseteq I_{\vec a}$, each with size at least $|B_{\vec a}|\geq |I_{\vec a}|/N^d=n/N^{2d}$, such that for every $\vec a\neq \vec b$ no element of $B_{\vec a}$ lies between two elements of $B_{\vec b}$ in the ordering $\preceq$. Because of this, $\preceq$ induces a linear order on the family of sets $B_{\vec a}$.

If we identify the set $B_{\vec a}$ with its index $\vec a$, we can apply Lemma~\ref{lem:FisGra} to this ordering to find sets $S_1, \dots, S_d$, each with size $t$, such that the family of sets $I_{\vec a}$ with  $\vec a\in S_1\times \dots \times S_t$ is sorted by layers. W.l.o.g. let us assume that the family of sets is sorted or inversely sorted by the first coordinate, as all other cases are analogous.

Next we define vectors $\vec X_1, \dots, \vec X_{r-2}, \vec Y_1, \dots, \vec Y_{r-2}, \vec Z_1, \vec Z_2\in [N]^d$. For each symbol $W\in \{X, Y, Z\}$, each index $j$ and each $i\in [d]$, let $W_j^i\in[t]$ be the position of the $j$-th letter $W$ in the descriptive sequence $\sigma_i$. Let the $i$-th coordinate in $\vec W_j$ be the $W_j^i$-th smallest element of $S_i$. Notice that all vectors $\vec w_j$ lie in $S_1\times\dots\times S_d$, and when sorted by their $i$-th coordinate, the letter sequence produced is precisely $\sigma_i$.

Because the sequence $\{H'_n\}_{n=1}^\infty$ is nowhere-empty, and $|B_{\vec a}|\geq n/N^{2d}$, for $n$ large enough there exists an edge $e'\in E(H'_n)$ which has one vertex in each of the sets $B_{\vec X_1}, \dots B_{\vec X_{r-2}}, B_{\vec Y_1}, \dots, B_{\vec Y_{r-2}}, B_{\vec Z_1}, B_{\vec Z_2}$. If we label each vertex of $e'$ with the letter in $\{X,Y,Z\}$ of the corresponding set, then for every $i\in[d]$, when we sort the vertices of $e'$ by the $i$-th coordinate, the resulting letter sequence is $\sigma_i$. That means that $e'$ contains a twin pair of edges of $H_n$, namely those with labels $X,Z$ and $Y,Z$.

We return to the vertex ordering $\preceq$. Remember that the sets $B_{\vec a}$ with $\vec a\in S_1\times \dots\times S_d$ are either sorted or inversely sorted by the first coordinate. As a consequence, the vertices of $e'$ are either in increasing or decreasing order of their first coordinate. This means that either the inconsistent descriptive sequence $\sigma_1$ or its reverse describe the twin pair $e,f$. In either case the role of $e\cap f$ in each of the edges $e$ and $f$ with the ordering $\preceq$ is different, as we wanted to show. We conclude that $H_n$ is inconsistent, finishing the proof of Theorem~\ref{thm:existpi}.
\end{proof}

\section{Reduced hypergraphs}\label{sec:reduced}

In the proof of our existence theorems we will use auxiliary hypergraphs called $(k,r)$-reduced graphs. These graphs are a generalization of the $n$-reduced hypergraphs introduced by Reiher in~\cite{Rei20}, which played a crucial role in many proofs related to uniform Tur\'an density. Their importance lies on their relation to the partitions obtained in the hypergraph regularity lemma.

A $(k,r)$-reduced hypergraph is an $\binom{r}{2}$-graph $H$ whose vertex set is partitioned into $\binom{k}{2}$ parts $V_{i,j}$, with $1\leq i<j\leq k$. For simplicity of writing we will not care about the order of the subindices, so $V_{i,j}$ and $V_{j,i}$ will denote the same set. For each edge $e\in E(H)$ there exist $r$ indices $t_1, t_2, \dots, t_r$, such that $e$ contains exactly one vertex in each set $V_{t_i,t_j}$ for each $1\leq i<j\leq r$.

Given indices $1\leq t_1<t_2<\dots<t_r\leq k$, the constituent $\mathcal{A}_{t_1, t_2, \dots, t_r}$ is the $\binom{r}{2}$-partite $\binom{r}{2}$-graph induced on the vertex sets $V_{t_i,t_j}$ with $1\leq i<j\leq r$. We say that $H$ has density at least $d$ if all constituents have edge density at least $d$.

We will briefly describe the connection between reduced graphs and hypergraph regularity, although it will necessitate the use of notation that will be introduced in Section~\ref{sec:reg}. These reduced graphs can be seen as the result of contracting each class of the partition $\fP^{(2)}$ of the pairs of vertices of a large $r$-graph $H'$ into a single vertex. If the edge density in $H'$ between any $r$ parts of $\fP^{(1)}$ is greater than $d$, then the resulting $(k,r)$-reduced hypergraph has density at least $d$. This will be the case if $H'$ is taken from a locally $d+\varepsilon$-dense sequence, after some cleanup (see Step 3 from the proof of Theorem~\ref{thm:dens14} and Theorem~\ref{thm:denspi}).

Our main lemmas involving $(k,r)$-reduced hypergraphs will have a connection with descriptive sequences of order $r$. Let $\sigma=(s_1, s_2, \dots, s_{2r-2})$ be a descriptive sequence. Let $H$ be a $(k,r)$-reduced hypergraph, and let $t_1<t_2<\dots<t_{2r-2}$ be indices. Let $x_1<x_2<\dots<x_r$ be the indices $t_i$ whose corresponding letter $s_i$ is $X$ or $Z$, and let $y_1<y_2<\dots<y_r$ be the indices whose corresponding letter is $Y$ or $Z$. We say that the $2r-2$-tuple $\{t_1, t_2, \dots, t_{2r-2}\}$ admits the descriptive sequence $\sigma$ if there exist edges $e_X\in\mathcal{A}_{x_1, x_2, \dots, x_r}$ and $e_Y\in \mathcal{A}_{y_1, y_2, \dots, y_r}$ intersecting in one vertex. The common vertex is in $V_{t_i,t_j}$, where $i$ and $j$ are the values such that $s_i=s_j=Z$.

We can now state our main lemmas on reduced hypergraphs. These lemmas will be used in the proofs of Theorem~\ref{thm:dens14} and Theorem~\ref{thm:denspi} to handle the analysis of the second layer of the regularity partition.

\begin{lemma}\label{lem:red14}
    For every $\varepsilon>0$, $r$ and $m$ there exists $k$ with the following property: if $H$ is a $(k,r)$-reduced hypergraph with density at least $1/4+\varepsilon$, then there exists a subset $S\subseteq [k]$ of $m$ indices such that every $2r-2$-tuple of indices in $S$ admits the descriptive sequence $XX\dots XZZYY\dots Y$.
\end{lemma}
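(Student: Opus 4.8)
The plan is to extract the monochromatic structure via Ramsey's theorem applied to a suitable coloring of $2r-2$-tuples. First I would observe that the descriptive sequence $XX\dots XZZYY\dots Y$ splits a $2r-2$-tuple $t_1<\dots<t_{2r-2}$ into the ``lower'' $r$-tuple $L=\{t_1,\dots,t_r\}$ (the $X,Z$ positions) and the ``upper'' $r$-tuple $U=\{t_{r-1},\dots,t_{2r-2}\}$ (the $Y,Z$ positions), which overlap precisely in $\{t_{r-1},t_r\}$; so the tuple admits the sequence if and only if there are edges $e_L\in\mathcal A_L$ and $e_U\in\mathcal A_U$ sharing the single vertex of $V_{t_{r-1},t_r}$ that lies in both. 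The natural object to control is, for each pair $a<b$, the set of vertices of $V_{a,b}$ that are ``useful'', and the key point is a counting/averaging step showing that in a constituent of density $\ge 1/4+\ep$ one cannot have the head-vertices and tail-vertices of edges be essentially disjoint.

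The main steps, in order: \textbf{(1)} For each constituent $\mathcal A_{s_1<\dots<s_r}$ of density $\ge 1/4+\ep$, consider the bipartite-like incidence between $V_{s_1,s_2}$ (``head'' coordinate, the $\{1,2\}$-role pair) and $V_{s_{r-1},s_r}$ (``tail'' coordinate, the $\{r-1,r\}$-role pair): an edge projects to a pair (head-vertex, tail-vertex). A density-$>1/4$ argument — essentially the bipartite Kővári–Sós–Turán-free regime is about $1/2$, but here the relevant threshold for ``some vertex is simultaneously a popular head and a popular tail'' comes out to $1/4$ — should force, for a positive fraction of the constituents or after passing to a sub-family, the existence of a vertex that is both the tail-endpoint of many edges and, in a neighboring constituent, the head-endpoint of many edges. \textbf{(2)} Color each $2r-2$-tuple of $[k]$ by whether it admits $XX\dots XZZYY\dots Y$; more precisely, to make Ramsey applicable one should first color smaller tuples (pairs, $r$-tuples) to record which vertices of each $V_{a,b}$ are ``heavy'' in the relevant sense, reducing to a bounded number of colors. \textbf{(3)} Apply Ramsey's theorem with the appropriate uniformity and with $m$ chosen large, and a number of colors depending only on $\ep,r$, to get the homogeneous set $S$; then argue that the homogeneous color must be the ``admits'' color, because the alternative would contradict the density bound $1/4+\ep$ inside $S$ (a constituent supported on $S$ still has density $\ge 1/4+\ep$, yet the bad color would say no head equals any tail, which the $1/4$-threshold forbids once $|S|$ is large).

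I expect the main obstacle to be Step (1): pinning down the precise extremal statement that density $>1/4$ in an $\binom r2$-partite $\binom r2$-graph forces a ``shared vertex between a head-rich and a tail-rich edge'', and doing so in a way that survives the projection onto the two coordinate-classes $V_{s_1,s_2}$ and $V_{s_{r-1},s_r}$ while the other $\binom r2 - 2$ coordinates are averaged out. In particular one must be careful that ``head-rich'' and ``tail-rich'' refer to the \emph{same} vertex of the shared class $V_{t_{r-1},t_r}$ across \emph{two different} constituents (the lower one $\mathcal A_{t_1,\dots,t_r}$ and the upper one $\mathcal A_{t_{r-1},\dots,t_{2r-2}}$), so the coloring in Step (2) has to be set up on tuples large enough to see both constituents simultaneously — hence coloring $(2r-2)$-tuples, not $r$-tuples — and the Ramsey color classes must encode enough about the vertex sets $V_{a,b}$ to make the final contradiction go through. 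Once the right finite coloring is identified, the rest is a routine application of Ramsey's theorem and the density hypothesis restricted to $S$.
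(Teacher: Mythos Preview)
Your structural picture is correct: the $(2r-2)$-tuple splits into a lower $r$-tuple $\{t_1,\dots,t_r\}$ and an upper $r$-tuple $\{t_{r-1},\dots,t_{2r-2}\}$ sharing the pair $\{t_{r-1},t_r\}$, and the task is to force the edge-incident subsets of $V_{t_{r-1},t_r}$ coming from the two constituents to overlap. But the execution has a gap at exactly the step you flag as the obstacle, and your conclusion there --- that the coloring ``has to be set up on tuples large enough to see both constituents simultaneously, hence coloring $(2r-2)$-tuples, not $r$-tuples'' --- is the wrong resolution.

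The paper colors \emph{$r$-tuples}, and the color is not ``which vertices of $V_{a,b}$ are heavy'' (that would not give a bounded color set) but a \emph{discretized pair of proportions}. For each $r$-tuple $t_1<\dots<t_r$, let $W$ be the set of vertices incident to some edge of $\mathcal A_{t_1,\dots,t_r}$, and assign the signature
\[
\left(\left\lfloor q\,\frac{|V_{t_1,t_2}\cap W|}{|V_{t_1,t_2}|}\right\rfloor,\ \left\lfloor q\,\frac{|V_{t_{r-1},t_r}\cap W|}{|V_{t_{r-1},t_r}|}\right\rfloor\right),\qquad q=\lceil 2/\ep\rceil.
\]
This has $(q+1)^2$ possible values, so Ramsey on $r$-tuples yields a set $S$ of size $m$ where every $r$-tuple carries the same signature $(a,b)$. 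The density bound applied to a \emph{single} constituent in $S$ gives $1/4+\ep\le\prod_{i<j}\frac{|V_{t_i,t_j}\cap W|}{|V_{t_i,t_j}|}\le\frac{a+1}{q}\cdot\frac{b+1}{q}$, whence $a+b>q$. Now for any $(2r-2)$-tuple in $S$, both the lower and upper $r$-tuples lie in $S$ and therefore carry this \emph{same} signature $(a,b)$: the tail-incident set of the lower constituent in $V_{t_{r-1},t_r}$ has proportion $\ge b/q$, the head-incident set of the upper constituent has proportion $\ge a/q$, and $a+b>q$ forces them to meet.

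So the device you were missing is precisely what dissolves your stated worry: homogeneity of the signature over \emph{all} $r$-tuples in $S$ lets you compare the two different constituents after the fact, without ever coloring $(2r-2)$-tuples. Your K\H{o}v\'ari--S\'os--Tur\'an framing in Step~(1) is a detour; the only extremal fact needed is that two subsets whose proportions sum to more than $1$ must intersect.
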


\begin{lemma}\label{lem:redpi}
    For every $\varepsilon>0$, $r$ and $m$ there exists $k$ with the following property: if $H$ is a $(k,r)$-reduced hypergraph with density at least $\pi_r+\varepsilon$, then there exists a subset $S\subseteq [k]$ of $m$ indices and an inconsistent descriptive sequence $\sigma$ such that every $2r-2$-tuple of indices in $S$ admits $\sigma$.
\end{lemma}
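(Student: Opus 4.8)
\textbf{Proof proposal for Lemma~\ref{lem:redpi}.}

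The plan is to mimic the structure of the (expected) proof of Lemma~\ref{lem:red14}, but now we must produce \emph{some} inconsistent descriptive sequence rather than the one fixed sequence $XX\dots XZZYY\dots Y$, which should actually make the colouring argument cleaner. Fix $\ep>0$, $r$, and $m$; we will specify how large $k$ must be at the end. Let $H$ be a $(k,r)$-reduced hypergraph of density at least $\pi_r+\ep$, with vertex parts $V_{i,j}$ for $1\le i<j\le k$. The key combinatorial quantity is $\pi_r=\binom r2^{-\binom r2}$: in a constituent $\mathcal A_{t_1,\dots,t_r}$ of density $\ge\pi_r+\ep$, if we pick one vertex uniformly at random from each of the $\binom r2$ parts, the probability that we get an edge exceeds $\binom r2^{-\binom r2}$, i.e. strictly more than the ``one in $\binom r2^{\binom r2}$'' threshold. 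This strict excess is what we will leverage via a counting/pigeonhole step to force, within every sufficiently large index set, two edges from two constituents that share exactly one vertex and whose shared vertex sits in an \emph{inconsistent} pair of positions.

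The main steps, in order, are as follows. First, I would set up a colouring of the $\binom{[k]}{2r-2}$ many $(2r-2)$-subsets of $[k]$: to each such tuple $T=\{t_1<\dots<t_{2r-2}\}$ assign, as its colour, the set of descriptive sequences $\sigma$ of order $r$ that $T$ does \emph{not} admit (equivalently, the full admissibility pattern of $T$). The number of descriptive sequences of order $r$ is a constant depending only on $r$, so this is a colouring with a bounded number of colours; applying Ramsey's theorem (with the Ramsey number $R_{2r-2}(m',k_{\mathrm{col}})$ for a suitable intermediate $m'$) yields a subset $S_0\subseteq[k]$ of size $m'$ all of whose $(2r-2)$-tuples have the \emph{same} admissibility pattern. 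It then suffices to show that this common pattern must include at least one inconsistent sequence, i.e.\ that it is impossible for \emph{every} $(2r-2)$-tuple in a large index set to fail to admit \emph{all} inconsistent descriptive sequences simultaneously. Second — the crux — I would derive a contradiction from the assumption that no $(2r-2)$-tuple in $S_0$ admits any inconsistent sequence. The point is that ``$T$ admits $\sigma$'' requires an edge $e_X\in\mathcal A_{x_1,\dots,x_r}$ and an edge $e_Y\in\mathcal A_{y_1,\dots,y_r}$ meeting in exactly one vertex, lying in $V_{t_i,t_j}$ with $s_i=s_j=Z$. If $T$ admits \emph{no} inconsistent $\sigma$, then for every way of choosing which two of the $2r-2$ indices are the ``$Z$'' indices that yields an inconsistent split, the corresponding two constituents share no common vertex through any edge pair at that location — a severe structural restriction. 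I would then count: inside $S_0$, consider a random $(2r-2)$-tuple and, for two overlapping $r$-subsets $A,B$ with $|A\cap B|=1$, estimate the probability that $\mathcal A_A$ and $\mathcal A_B$ each contain an edge using a \emph{fixed} common vertex of $V_{A\cap B}$. Density $\ge\pi_r+\ep$ in each constituent, combined with convexity/averaging over the choice of the shared vertex, forces a positive-probability coincidence once $|S_0|$ is large; choosing the split to be inconsistent (which is possible for $r\ge3$, since inconsistent descriptive sequences of order $r$ exist — this is exactly why Lemma~\ref{lem:FisGra} and the existence theorem needed them) gives an admitted inconsistent sequence, contradiction. Third, I would unwind the quantifiers: take $m'$ large enough that the counting step in Step~2 fires, take $k=R_{2r-2}(m',k_{\mathrm{col}})$, and finally pass from $S_0$ to a subset $S$ of size $m$ — in fact $S_0$ itself, with $m'\ge m$, works, since all $(2r-2)$-tuples of $S_0$ share the admissibility pattern and we have shown that pattern contains an inconsistent $\sigma$; that single $\sigma$ is then admitted by every $(2r-2)$-tuple of $S$.

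The main obstacle I anticipate is Step~2: making precise the averaging argument that turns the density excess $\ep$ over the threshold $\pi_r$ into a forced pair of overlapping edges at a \emph{prescribed} (inconsistent) location. One has to be careful that the constituents $\mathcal A_A$ and $\mathcal A_B$ for $|A\cap B|=1$ genuinely share the vertex part $V_{A\cap B}$, and that the densities in the two constituents can be ``coupled'' through a common vertex of that part — this likely needs a second-moment or Cauchy–Schwarz step showing that a $\ge\pi_r+\ep$ density constituent cannot have its edges avoid every vertex of one prescribed part, once there are enough indices around to average over. A clean way to package this may be to first pass (via another Ramsey/pigeonhole step on $S_0$) to a sub-family of constituents whose densities and degree sequences into each part are essentially homogeneous, and then do the coincidence count there. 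The bookkeeping of which pairs of positions $\{i,j\}$ give inconsistent sequences — and checking there is always at least one for $r\ge3$ — is routine but must be stated explicitly, since the whole lemma is vacuous without it. Everything else (the Ramsey reduction, the final choice of constants) is standard and should go through without difficulty.
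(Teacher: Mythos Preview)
Your outer scaffolding matches the paper's proof: Ramsey on $(2r-2)$-tuples by admissibility pattern, then derive a contradiction from the assumption that no inconsistent sequence is admitted, using a second Ramsey pass to homogenize degree data of constituents. That part is fine.

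There is, however, a real error and a real gap in your Step~2. First, the error: you write ``two overlapping $r$-subsets $A,B$ with $|A\cap B|=1$''. In a descriptive sequence there are \emph{two} $Z$'s, so the $X$/$Z$ and $Y$/$Z$ index sets overlap in exactly two indices; with $|A\cap B|=1$ the set $V_{A\cap B}$ is not even defined, since vertex parts are indexed by pairs. You need $|A\cap B|=2$.

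More seriously, even with $|A\cap B|=\{a,b\}$, the coincidence argument you sketch does not go through with a single pair $(A,B)$. Density $\ge\pi_r+\ep$ in $\mathcal A_A$ only forces the support of $\mathcal A_A$ in $V_{a,b}$ to have relative size at least $\pi_r+\ep$, and two subsets of $V_{a,b}$ each of that size need not intersect. No amount of Cauchy--Schwarz or averaging over the shared vertex rescues this for a single inconsistent split. The paper's key idea, which your proposal is missing, is to use \emph{all} $\binom{r}{2}$ roles at once: after homogenizing profiles $(a_{i,j})$ via Ramsey on $r$-tuples, one builds $\binom{r}{2}$ constituents $\mathcal A_{R_{i,j}}$ all containing a fixed pair $\{a,b\}$, with $\{a,b\}$ playing role $\{i,j\}$ in $R_{i,j}$. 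The density bound $\pi_r+\ep\le\prod_{i<j}(a_{i,j}+1)/q$ together with AM--GM gives $\sum_{i<j}a_{i,j}/q>1$, so the $\binom{r}{2}$ supports $W_{i,j}\subseteq V_{a,b}$ cannot be disjoint; the two that meet yield an inconsistent sequence since the $Z$-pair plays distinct roles $\{i,j\}\neq\{i',j'\}$. Your proposal gestures at homogenization but never reaches this pigeonhole over the $\binom{r}{2}$ roles, and that is precisely where the threshold $\pi_r=\binom{r}{2}^{-\binom{r}{2}}$ enters.
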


\begin{proof}[Proof of Lemma~\ref{lem:red14}]
 We can assume that $m\geq 2r-2$, otherwise the statement holds trivially for $k=m$. Let $q=\lceil2/\ep\rceil$. For every $r$-tuple of indices $t_1<t_2<\dots<t_r$, we define its signature as follows. Let $W$ be the set of vertices which are incident to at least one edge in $\mathcal{A}_{t_1, t_2, \dots, t_r}$. Then the signature of $\{t_1, t_2, \dots, t_r\}$ is the pair \[\left(\left\lfloor q\cdot\frac{|V_{t_1,t_2}\cap W|}{|V_{t_1, t_2}|}\right\rfloor,\left\lfloor q\cdot\frac{|V_{t_{r-1},t_r}\cap W|}{|V_{t_{r-1}, t_r}|}\right\rfloor\right).\]

 Note that there are $(q+1)^2$ possible signatures. If $k=R_r(m, (q+1)^2)$, there exists a set $S$ of $m$ indices in which every $r$-tuple of indices has the same signature $(a,b)$.

 Next we claim that $a+b>q$. Let $t_1<t_2<\dots<t_r$ be $r$ indices in $S$, and let $W$ be as above. Because each edge in $\mathcal{A}_{t_1, t_2, \dots, t_r}$ has a vertex in each $V_{t_i,t_j}$ with $1\leq i<j\leq r$, and that vertex must be in $W$ by definition, the number of edges in $\mathcal{A}_{t_1, t_2, \dots, t_r}$ is at most $\prod_{1\leq i<j\leq r}|V_{t_i, t_j}\cap W|$. On the other hand, because $H$ has density at least $1/4+\varepsilon$, the number of edges in $\mathcal{A}_{t_1, t_2, \dots, t_r}$ is at least $(1/4+\varepsilon)\prod_{1\leq i<j\leq r}|V_{t_i, t_j}|$. Dividing the two equations we obtain that \[\frac14+\ep\leq\prod\limits_{1\leq i<j\leq r}\frac{|V_{t_i,t_j}\cap W|}{|V_{t_i,t_j}|}\leq \frac{|V_{t_1,t_2}\cap W|}{|V_{t_1,t_2}|}\cdot \frac{|V_{t_{r-1},t_r}\cap W|}{|V_{t_{r-1},t_r}|}\leq \frac{a+1}{q}\cdot \frac{b+1}{q}.\]
 From here we conclude $(a+b+2)^2\geq (1+4\varepsilon)q^2$, so $a+b> (1+\varepsilon)q-2\geq q$.

 We are now ready to prove that $S$ satisfies the property in our statement. Let $t_1<t_2<\dots<t_{2r-2}$ be indices in $S$. Let $W_1$ and $W_2$ be the set of vertices in $V_{t_{r-1},t_r}$ which are incident to an edge in $\mathcal{A}_{t_1, t_2, \dots, t_r}$ and in $\mathcal{A}_{t_{r-1}, t_r, \dots, t_{2r-2}}$, respectively. Then \[|W_1\cap W_2|\geq |W_1|+|W_2|-|V_{t_{r-1}, t_r}|\geq \frac{a+b-q}{q}|V_{t_{r-1}, t_r}|>0.\] There exist edges $e_1\in \mathcal{A}_{t_1, t_2, \dots, t_r}$ and $e_2\in \mathcal{A}_{t_{r-1}, t_r, \dots, t_{2r-2}}$ intersecting in a vertex, so $\{t_1, t_2, \dots, t_{2r-2}\}$ admits the descriptive sequence $XX\dots XZZYY\dots Y$.\end{proof}

 \begin{proof}[Proof of Lemma~\ref{lem:redpi}]

Let $q=\left\lceil 2^{\binom{r}{2}}/\varepsilon\right\rceil$, let $p=R_r\left(r^3, (q+1)^{\binom{r}{2}}\right)$ and let $k=R_{2r-2}(\max\{p,m\}, 3^{2r-2}+1)$. Let $H$ be a $(k,r)$-reduced hypergraph. Consider each $2r-2$-tuple of indices in $[k]$. If it admits one or more inconsistent descriptive sequences, assign one arbitrarily, otherwise assign it the empty sequence. By the choice of $k$, there is a subset $S$ of $\max\{p, m\}$ indices where every $2r-2$-tuple gets assigned the same sequence. If this sequence is non-empty, we are done. We will assume that they get assigned the empty sequence, and reach a contradiction.

For each $r$-tuple of indices $t_1<t_2<\dots<t_r$ in $S$, we define its profile $(a_{1,2}, a_{1, 3}, \dots, a_{r-1, r})$, where $a_{i,j}=\lfloor q\cdot |V_{t_i, t_j}\cap W|/|V_{t_i, t_j}| \rfloor$ and $W$ is the set of vertices incident to at least one edge in $\mathcal{A}_{t_1, t_2, \dots, t_r}$. Note that there are at most $(q+1)^{\binom{r}{2}}$ distinct profiles. By the choice of $p$, there exists a subset $S'\subseteq S$ of $q^3$ indices where every $r$-tuple has the same profile $(a_{1,2}, a_{1, 3}, \dots, a_{r-1, r})$.

We claim that $\sum_{1\leq i<j\leq r}a_{i,j}>1$. To see this, let $t_1<t_2<\dots<t_r$ be indices in $S'$, and let $W$ as above. On the one hand, the number of edges in $\mathcal{A}_{t_1, t_2, \dots, t_r}$ is at least $(\pi_r+\varepsilon)\prod_{1\leq i<j\leq r}|V_{t_i, t_j}|$, since $H$ has density at least $\pi_r+\varepsilon$. On the other hand, the number of edges is at most $\prod_{1\leq i<j\leq r}|V_{t_i, t_j}\cap W|$. Since $\frac{|V_{t_i,t_j}\cap W|}{|V_{t_i, t_j}|}\leq \frac{a_{i,j}+1}{q}$, by the arithmetic-geometric mean inequality we have 

\begin{align*}
\sum\limits_{1\leq i<j\leq r}a_{i,j}=&\binom{r}{2}\left(q\left(\binom{r}{2}^{-1}\sum\limits_{1\leq i<j\leq r}\frac{a_{i,j}+1}{q}\right)-1\right)\\ \geq& \binom{r}{2}\left(q\left(\prod\limits_{1\leq i<j\leq r}\frac{a_{i,j}+1}{q}\right)^{\frac{1}{\binom{r}{2}}}-1\right)\\ \geq&\binom{r}{2}\left((\pi_r+\varepsilon)^{\frac{1}{\binom{r}{2}}}-1/q\right)\\ =&\left(1+\frac{\ep}{\pi_r}\right)^{\frac{1}{\binom{r}{2}}} -\frac{\binom{r}{2}}{q}>1.\end{align*}

Within $S'$, choose elements $a,b\in S'$ and $r$-tuples of indices $R_{1, 2}, R_{1, 3}, \dots, R_{{r-1}, r}$ with the following properties:
\begin{itemize}
    \item $a,b\in R_{i,j}$ for all $1\leq i<j\leq r$.
    \item The sets $R_{i,j}\setminus \{a,b\}$ are pairwise disjoint.
    \item For all $1\leq i<j\leq r$, if $t_1<t_2<\dots<t_r$ are the elements of $R_{i,j}$, then $t_i=a$ and $t_j=b$.
\end{itemize}

One way to construct these sets is to start with a family consisting of $\binom{r}{2}$ sets $R_{i,j}$, intersecting in two elements $a,b$ and otherwise disjoint. Define a partial order $\preceq$ on the union of these sets: for each $1\leq i<j\leq r$, let $i-1$ elements of $R_{i,j}\setminus\{a,b\}$ be smaller than $i$, $j-i-1$ of these elements be larger than $i$ and smaller than $j$, and the remaining $r-j$ elements be larger than $b$. This forces the pair $\{a,b\}$ to play the role $\{i,j\}$ in $R_{i,j}$ on any linear extension of $\preceq$. Pick arbitrarily any such linear extension, and identify the elements of $\cup_{i,j}R_{i,j}$ with elements of $S'$ in the right order of $\preceq$. 

Let $W_{i,j}$ be the set of vertices in $V_{a,b}$ incident to at least one edge in $\mathcal{A}_{R_{i,j}}$. We have that $|W_{i,j}|\geq a_{i,j}|V_{a,b}|$, so the sizes of all $W_{i,j}$ add up to more than $|V_{a,b}|$. This means that the sets are not disjoint, and there exist $(i,j)\neq(i',j')$ such that $W_{i,j}\cap W_{i', j'}\neq\emptyset$. There exist edges $e_X\in \mathcal{A}_{R_{i,j}}$ and $e_Y\in \mathcal{A}_{R_{i', j'}}$ intersecting at a vertex of $V_{a,b}$.

Let $t_1<t_2<\dots<t_{2r-2}$ be the elements of $R_{i,j}\cup R_{i', j'}$. Let $\sigma=(s_1, s_2, \dots, s_{2r-2})$ be the descriptive sequence where $s_i=X$ if $t_i\in R_{i,j}\setminus R_{i', j'}$, $s_i=Y$ if $t_i\in R_{i',j'}\setminus R_{i,j}$ and $s_i=Z$ if $t_i\in R_{i,j}\cap R_{i', j'}=\{a,b\}$. This descriptive sequence is inconsistent, because the pair of $Z$ entries plays the role $\{i,j\}$ among the entries $\{X,Z\}$ and the role $\{i',j'\}$ among the entries $\{Y,Z\}$.

As a consequence, the $2r-2$-tuple of indices $t_1, t_2, \dots, t_{2r-2}$ admits the inconsistent descriptive sequence $\sigma$. This contradicts the fact that every $2r-2$-tuple of indices in $S$ was assigned the empty sequence, finishing the proof.
\end{proof}

\section{Hypergraph regularity}\label{sec:reg}

The proofs of Theorems~\ref{thm:dens14} and~\ref{thm:denspi} are based on the use of the regularity lemma, and its corresponding counting lemma. Throughout the existing literature one can find a multitude of versions and variations of the hypergraph regularity lemma, not all of them equivalent to each other. The formulation used here, as well as all the notation involved, is taken from a survey by R\"odl and Schacht~\cite{RodS07, RodS07a}. Like most versions of the hypergraph regularity lemma, the statement requires introducing a number of concepts and definitions. The purpose of this section is to introduce and state the hypergraph regularity lemma and the counting lemma, in which we will only care about the number of copies of a certain subgraph being non-zero.

The first concept that we need to introduce is vertex partitions. Let $V$ be a vertex set. Unlike the graph regularity lemma, where we only need to partition $V$ itself, when we apply the regularity lemma to $r$-graphs, we need additional partitions of sets of pairs of vertices, triples of vertices, and so on up to $(r-1)$-tuples of vertices. These partitions must be nested, in a sense that we will explain soon.

Generally, we will denote partitions with Fraktur letters such as $\mathfrak{P}$, and the parts in those partitions with caligraphic letters such as $\mathcal{P}$. We will write a superindex between parenthesis to indicate the number of vertices in each element of the partitioned set: for example, $\mathfrak{P}^{(3)}$ denotes a partition of a set of triples of vertices.

Let $\mathfrak{P}^{(1)}=\{\mathcal{P}^{(1)}_1, \dots, \mathcal{P}^{(1)}_k\}$ be a partition of the vertex set $V$, and let $i$ be a positive integer. We denote the set of all $i$-tuples of vertices of $V$, with at most one vertex from each part of $\mathfrak{P}^{(1)}$, by $\cross_i(\fP^{(1)})$. 

Let $\fP^{(1)}$ be a partition of $V$, and for each $2\leq i\leq r-1$, let $\fP^{(i)}$ be a partition of $\cross_i(\fP^{(1)})$. Let $\fP=\{\fP^{(1)}, \fP^{(2)}, \dots, \fP^{(r-1)}\}$. Given a set $I\in\cross_i(\fP^{(1)})$, we denote the part of $\fP^{(i)}$ that contains $I$ by $\cP^{(i)}(I)$. We say that $\fP$ is a \emph{nested} family of partitions if the following holds: for every $2\leq i\leq r-1$, for every $\cP^{(i)}\in\fP^{(i)}$ and every $\cP^{(i-1)}\in\fP^{(i-1)}$, if some element of $\cP^{(i)}$ is a superset of some element of $\cP^{(i-1)}$, then every element of $\cP^{(i)}$ is a superset of some element of $\cP^{(i-1)}$. In other words, if we fix a part $\cP^{(i)}\in\fP^{(i)}$, pick some $T\in \cP^{(i)}$, and then consider the family of parts $\{\cP^{(i-1)}(T-v)|v\in T\}$, that family does not depend on the choice of $T$.

The concept of \emph{polyad} will be useful to define how elements of one layer of $\fP$ are placed with respect to another layer. Given $j<i$, given a set $I\in\cross_i(\fP^{(1)})$, we define the $j$-th polyad of $I$, denoted by $\hat \cP^{(j)}(I)=\{\cP^{(j)}(J)|J\subset I, |J|=j\}$. Observe that, by the condition above, in a nested family of partitions, given a class $\cP^{(i)}\in \fP^{(i)}$, the $i-1$-th polyad of all elements of $\cP^{(i)}$ is the same, and by induction, so is the $j$-th polyad for any $j<i$. We define $\hat\fP^{(j)}$ to be the set of all $j$-th polyads of elements of $\cross_{j+1}(\fP^{(1)})$.

Given a vector $\vec a=(a_1, \dots, a_r)$, we say that the family of partitions $\fP=\{\fP^{(1)}, \dots, \fP^{(r)}\}$ is $(r,\vec a)$-nested if $\fP^{(1)}$ consists of $a_1$ parts, and for each $2\leq i\leq r$, for each $\hat\cP^{(i-1)}\in\hat\fP^{(i-1)}$, there are exactly $a_i$ classes in $\fP^{(i)}$ such that the $i-1$-th polyad of their elements is $\hat\cP^{(i-1)}$. Moreover, we say that $\fP$ is $t$-bounded if all entries of $\vec a$ are less than or equal to $t$.

In the regularity lemma for graphs, we require that the partition is equitable, that is, all the parts have the same number of vertices. Something similar will hold here, except the condition will be more complicated. In a nutshell, we want each element of $\hat\fP^{(i-1)}$ to be the $(i-1)$-th polyad corresponding to roughly the same number of elements of $\cross_i(\fP^{(1)})$. In order to guarantee this, we need to start discussing the regularity conditions that the final partition will satisfy.

Let $i\leq k-1$, and let $H$ and $H'$ be an $i-1$-graph and an $i$-graph, respectively, on the same vertex set. We denote  by $K_i(H)$ the set of $i$-tuples of vertices $I$ such that $I-v\in H$ for all $v\in I$. We call this the set of $i$-cliques of $H$. In the particular case in which $i=2$, and $H'$ is a bipartite graph with a given partition $U,V$ (which is the case for each of the parts of $\fP^{(2)}$), we will define $K_2(H)$ as only containing the edges from $H\cap U$ to $H\cap V$, rather than all pairs of vertices in $H$ (observe that in this case $H$ is a $1$-graph, that is, a collection of vertices).

The \emph{density} of $H'$ w.r.t. $H$, denoted by $d(H'|H)$, as the value \[\frac{|H'\cap K_i(H)|}{|K_i(H)|},\] that is, the proportion of the $i$-cliques of $H$ which are edges of $H'$. If $H$ does not contain any $i$-clique, we take this density to be 0.

Given a collection of $\beta$ $(i-1)$-graphs $B=\{H_1, H_2, \dots, H_\beta\}$, we define $K_i(B)=\cup_{j=1}^\beta K_i(H_j)$. Then the density of $H'$ over $B$, denoted by $d(H'|B)$, can be defined analogously.
We say that $H'$ is $(d,\xi,\beta)$-regular w.r.t. $H$ if the following holds for all families $B$ of $\beta$ subgraphs $Q_1, Q_2, \dots, Q_\beta\subseteq H$, if $|K_i(B)|\geq\xi|K_i(H)|$, then $\left|d(H'|B)-d\right|\leq\xi$.

Let $\fP$ be a $(r-1,\vec a)$-nested family of partitions, and suppose that $a_1$ divides $|V|$. Let $\eta$ be a positive real number, and $\vec\xi=(\xi_2, \xi_3, \dots, \xi_{r-1})$ be a positive vector. We say that $\fP$ is $(\eta, \vec \xi, \vec a,\beta)$-equitable if the following three conditions hold:
\begin{itemize}
\item $\left|\cross_k(\mathfrak{P}^{(1)})\right|\geq (1-\eta)\binom{n}{k}$.
\item All parts of $\mathfrak{P}^{(1)}$ have equal size.
\item For every $2\leq j\leq r-1$, for every $\mathcal{P}^{(j)}\in\mathfrak{P}^{(j)}$, we have that $\mathcal{P}^{(j)}$ is $(1/a_j,\xi_j,\beta)$-regular in its polyad $\hat{\mathcal{P}}^{(j-1)}(\mathcal{P}^{(j)})$.
\end{itemize}

We are almost ready to state the hypergraph regularity lemma. All that is left to do is define when a hypergraph is regular w.r.t. a partition. We say that $H'$ is $(*,\xi,\beta)$-regular w.r.t. $H$ if it is $(d,\xi,\beta)$-regular for some value of $d$.

\begin{definition}[$(*,\xi,\beta)$-regular w.r.t. $\mathfrak{P}$] Let $\xi$ be a positive real number and $\beta$ a positive integer. Suppose that $H$ is an $r$-uniform hypergraph on vertex set $V$, and $\mathfrak{P}$ is an $(r-1,\vec a)$-nested family of partitions. We say that $H$ is $(*,\xi,\beta)$-regular w.r.t. $\mathfrak{P}$ if the number of elements $K\in \cross_r(\mathfrak{P}^{(1)})$ such that $H$ is not $(*,\xi,\beta)$-regular w.r.t. the polyad $\hat{\mathcal{P}}^{(r-1)}(K)$ is at most $\xi|V(H)|^r$.\end{definition}

\begin{theorem}[Hypergraph regularity lemma, simplified from~\cite{RodS07}]\label{thm:regularity}
Let $r\geq 2$ be a fixed integer. For all positive constants $\eta$ and $\xi_r$, all functions $\beta:\NN^{r-1}\rightarrow\NN$, and functions $\xi_2, \xi_3, \dots, \xi_{r-1}$, with $\xi_i:\NN^{r-i}\rightarrow(0,1]$, there exist integers $t$ and $n_0$ for which the following holds:

For every $r$-uniform hypergraph $H$ on $n$ vertices, where $n\geq n_0$, where $t!$ divides $n$, there exists an $(r-1, \vec a)$-nested family of partitions $\fP$, for some vector $\vec a$, such that:

\begin{itemize}
\item $\fP$ is $(\eta, \vec\xi(\vec a), \vec a, \beta(\vec a))$-equitable and $t$-bounded, and
\item $H$ is $(*,\xi_r,\beta(\vec a))$-regular w.r.t. $\fP$.
\end{itemize}

where $\xi_i(\vec a)=\xi_i(a_i, a_{i+1}, \dots, a_{r-1})$.
\end{theorem}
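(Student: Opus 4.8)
The plan is to recognize Theorem~\ref{thm:regularity} as a streamlined restatement of the regularity lemma for $k$-uniform hypergraphs of R\"odl and Schacht, so that the cleanest route is to derive it directly from their result; I will nonetheless sketch the self-contained energy-increment argument that underlies it.

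First I would set up the layered refinement scheme. Begin with an arbitrary equitable partition $\fP^{(1)}$ of $V$ into a bounded number of parts of equal size, which is possible since $t!$ divides $n$. Given a nested family $\fP=\{\fP^{(1)},\dots,\fP^{(r-1)}\}$ constructed so far, test whether it is $(\eta,\vec\xi,\vec a,\beta)$-equitable and whether $H$ is $(*,\xi_r,\beta)$-regular w.r.t.\ $\fP$. If both hold, we are done. Otherwise, irregularity at some layer $i$ is witnessed by a family of ``certificates'', i.e.\ subgraphs $Q_1,\dots,Q_\beta$ of some polyad on whose union the observed density deviates from the claimed value $1/a_i$ by more than $\xi_i$; I would use the partition generated by these certificates to refine the layer-$i$ partition, then cascade that refinement up through layers $i+1,\dots,r-1$ so that the family stays nested, and finally restore the equitability of $\fP^{(1)}$.

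The termination and bookkeeping are the crux. For each layer $j$ define a mean-square-density index $q_j(\fP)$, obtained by averaging $d(\,\cdot\mid\hat\cP^{(j-1)})^2$ appropriately over the polyads in $\hat\fP^{(j-1)}$, and set $q(\fP)=\sum_{j}q_j(\fP)$. Each $q_j$ lies in $[0,1]$, so $q(\fP)\le r$, while a defect form of the Cauchy--Schwarz inequality shows that any refinement step triggered by a $\xi_i$-irregularity increases $q(\fP)$ by at least a quantity depending only on $\xi_i$. Hence the procedure halts after a number of steps bounded in terms of the input parameters alone, which in turn bounds a priori the number of parts produced at every layer --- this is precisely what yields the uniform constant $t$, and it explains why the lemma is permitted to fix the precision vector $\vec\xi$ and the block size $\beta$ as functions of the (not yet determined) partition vector $\vec a$: by the time these must be chosen, the number of parts is already known.

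The main obstacle is the interaction between layers. Refining layer $i$ alters the polyads $\hat\fP^{(i)},\dots,\hat\fP^{(r-1)}$ and therefore changes the very meaning of regularity at every higher layer, so one cannot treat the layers independently; instead the regularity tests must be interleaved in a carefully chosen (bottom-up) order, and one must control how much a refinement low in the hierarchy can damage the regularity already secured above it. Managing this, together with the small defect ``$|\cross_k(\fP^{(1)})|\ge(1-\eta)\binom{n}{k}$'' arising from $k$-tuples that meet some part of $\fP^{(1)}$ twice, is the technical heart of the R\"odl--Schacht proof. For the purposes of this paper it suffices to invoke it: the statement above is obtained from \cite{RodS07} by retaining only the consequences we will use --- equitability, $t$-boundedness, and $(*,\xi_r,\beta)$-regularity of $H$ --- and by folding the divisibility requirement into the standing hypothesis that $t!$ divides $n$.
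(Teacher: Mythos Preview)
Your proposal is correct and aligned with the paper's treatment: the paper does not prove Theorem~\ref{thm:regularity} at all but simply states it as a simplified quotation from R\"odl and Schacht~\cite{RodS07}, which is exactly what you conclude in your final paragraph. Your additional sketch of the energy-increment argument is accurate extra content that the paper itself omits.
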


The counterpart for the regularity lemma is the counting lemma. In this case, since we are only concerned with the existence of a copy of a certain hypergraph $F$ as a subgraph of $H$, rather than with approximating the number of copies, we can simplify the statement significantly. Moreover, we will state it for a case in which $H$ comes equipped with a nested family of partitions.

The key ingredient of the hypergraph counting lemma is defining the parts of the partition $\fP$ that will contain not just each vertex, but also each pair of vertices, each triple of vertices, and so on up to each $r-1$-tuple of vertices. We need to ensure that these choices are consistent: if $A\subseteq B$ are two sets of vertices, we need to make sure that the part of $\fP^{(|A|)}$ containing $A$ is in the polyad of the part of $\fP^{(|B|)}$ containing $B$.

Given an $r$-graph $F$, we denote by $F_{<i>}$ the set of $i$-tuples of vertices which are contained in some edge of $F$. In particular, $F_{<1>}=V(F)$ and $F_{<r>}=E(F)$.

\begin{theorem}[Counting lemma]\label{thm:count} For every $1\leq r\leq \ell$, every $r$-uniform hypergraph $F$ on vertex set $[\ell]$, for all $d_r$ there exists $\xi_r$ such that for all $a_{r-1}$, there exists $\xi_{r-1}$ such that for all $a_{r-2}$ there exists $\xi_{r-2}$ \dots\ such that for all $a_2$ there exist $\xi_2, \beta$ and $m_0$ such that the following holds:

Let $H$ be an $r$-uniform hypergraph, with a $(r-1,\vec a)$-nested, $(1,\vec \xi,\vec a, \beta)$-equitable family of partitions $\fP$, where the size of each part of $\fP^{(1)}$ is at least $m_0$. For each $1\leq i\leq r-1$, let $f_i:F_{<i>}\rightarrow \fP^{(i)}$ be a function, as well as $f_r:E(F)\rightarrow \hat\fP^{(r-1)}$. Suppose that the following conditions hold:
\begin{itemize}
\item For every $2\leq i\leq r-1$, for every $S\in F_{<i>}$, we have that $\hat\fP^{(i-1)}(f_i(S))=\cup_{v\in S}f_{i-1}(S-v)$.
\item For every $e\in E(F)$, we have $f_r(e)=\cup_{v\in e}f_{r-1}(e-v)$.
\item For every $e\in E(H)$, the hypergraph $H$ is $(d, \xi_r, \beta)$-regular w.r.t. $f_r(e)$, for some $d\geq d_r$.
\end{itemize}

Then $H$ contains $F$ as a subgraph.
\end{theorem}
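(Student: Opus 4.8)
The plan is to deduce this qualitative statement from the (quantitative) counting lemma for regular partitions of $r$-graphs — for instance the version in the R\"odl--Schacht survey~\cite{RodS07a} from which the regularity lemma above is taken, or the original counting lemmas of Nagle--R\"odl--Schacht and of R\"odl--Skokan — and then observe that the count it provides is strictly positive. The hypotheses stated here are exactly what is needed. The compatibility identities $\hat\fP^{(i-1)}(f_i(S))=\bigcup_{v\in S}f_{i-1}(S-v)$ for $S\in F_{<i>}$ and $f_r(e)=\bigcup_{v\in e}f_{r-1}(e-v)$ for $e\in E(F)$ say precisely that the parts $\{f_i(S):S\in F_{<i>},\,1\le i\le r-1\}$, sitting over the vertex classes $f_1(v)$ for $v\in V(F)$, assemble into a sub-complex of $\fP$: at each level $2\le j\le r-1$ the part $f_j(S)$ is $(1/a_j,\xi_j,\beta)$-regular over its $(j-1)$-polyad (this is the equitability hypothesis on $\fP$), and $H$ is $(d,\xi_r,\beta)$-regular over the top polyad $f_r(e)$ with $d\ge d_r$ for each edge $e$. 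The $r$-subsets of $V(F)$ that are not edges are never referenced, since we only want $F$ as a (not necessarily induced) subgraph, so there is nothing more to set up.

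With this in hand, the counting lemma yields that the number of maps $\phi\colon V(F)\to V(H)$ with $\phi(S)\in f_i(S)$ for all $S\in F_{<i>}$ and $\phi(e)\in E(H)$ for all $e\in E(F)$ is
\[
(1\pm o(1))\,\prod_{v\in V(F)}|f_1(v)|\;\cdot\;\prod_{j=2}^{r-1}\Big(\tfrac1{a_j}\Big)^{|F_{<j>}|}\;\cdot\;\prod_{e\in E(F)}d\big(H\,|\,f_r(e)\big),
\]
where the error $o(1)$ can be forced below $\tfrac12$ by choosing the $\xi_i$ small and $\beta,m_0$ large in the order imposed by the statement — first $d_r$, then $\xi_r$, then $a_{r-1}$, then $\xi_{r-1}$, and so on down to $\xi_2,\beta,m_0$ — which is exactly the order in which the quantitative lemma consumes its parameters. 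Since $|f_1(v)|\ge m_0$, each $a_j\le t$, and $d(H\,|\,f_r(e))\ge d_r>0$, the displayed quantity is at least a positive constant depending only on $\ell,\vec a,d_r$ times $m_0^{\ell}$, hence at least $1$ once $m_0$ is large; and only an $o(1)$ fraction of these maps can fail to be injective (a collision would force two vertices of $F$ not joined by any edge into a common part, costing a factor $1/m_0$), so one of them is a genuine copy of $F$ in $H$.

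The only point needing real care is matching the exact form of the counting lemma being cited: we assume merely a \emph{lower} bound $d\ge d_r$ on the top density rather than prescribing it, whereas some formulations fix the densities. This is harmless, because the count above is monotone in the $d(H\,|\,f_r(e))$ while the lower layers are pinned to density $1/a_j$ by equitability; if one prefers a self-contained argument, the statement can also be proved by induction on $r$, embedding the vertices of $F$ one at a time and maintaining for each not-yet-embedded vertex a ``link complex'' of candidates that remains regular after each step — the heart of that argument being the regular-restriction lemmas, where essentially all the difficulty of hypergraph regularity lives. Either way, because we only need one copy and every layer carries positive-constant density, no quantitative counting is required beyond the fact that the count is nonzero.
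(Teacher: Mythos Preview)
The paper does not give its own proof of this theorem: it is stated as a simplified, existence-only consequence of the quantitative hypergraph counting lemma from the R\"odl--Schacht survey~\cite{RodS07,RodS07a} (see the paragraph immediately preceding the statement), and is used as a black box throughout Section~\ref{sec:density}. Your proposal---deduce the qualitative statement from the quantitative counting lemma by verifying that the compatibility conditions assemble the parts $f_i(S)$ into a regular complex and that the resulting count is strictly positive---is exactly the intended derivation, and is correct.
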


Before starting with the proof, let us mention that, while we will only need the existence version of the \emph{hypergraph} counting lemma, we will use the more precise version of the \emph{graph} counting lemma, which we now state.

\begin{lemma}\label{lem:graphcount}
    Let $k$ be a positive integer. For every $\varepsilon>0$ there exists $\xi>0$ with the following property. Let $G$ be a graph and let $V_1, V_2, \dots, V_k$ be disjoint vertex subsets of size $n$. Let $S$ be the set of $k$-cliques in $G$ with one vertex in each $V_i$. If for every $1\leq i<j\leq k$ the bipartite graph induced on $(V_i,V_j)$ is $(d,\xi)$-regular, then $\left||S|-d^{\binom{k}{2}}n^k\right|\leq \varepsilon n^k$.
\end{lemma}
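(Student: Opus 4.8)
The plan is to prove this classical counting lemma by induction on $k$, after strengthening the statement so that the induction closes: I claim that for every $\varepsilon>0$ there is $\xi>0$ such that whenever $V_1,\dots,V_k$ are pairwise disjoint of arbitrary sizes $n_1,\dots,n_k$ and for all $i<j$ the pair $(V_i,V_j)$ is $(d_{ij},\xi)$-regular for some $d_{ij}$, the number $|S|$ of transversal $k$-cliques satisfies $\bigl||S|-\bigl(\prod_{i<j}d_{ij}\bigr)\prod_i n_i\bigr|\le\varepsilon\prod_i n_i$. Specializing to $n_i=n$ and $d_{ij}=d$ recovers the lemma. The base cases $k=1$ (where $|S|=n_1$) and $k=2$ (where applying the definition of regularity with $A=V_1,\ B=V_2$ gives $|S|=e(V_1,V_2)=(d_{12}\pm\xi)n_1n_2$) hold with $\xi\le\varepsilon$.

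For the inductive step I would fix $\varepsilon>0$, a threshold $\delta=\delta(\varepsilon)>0$ and a target error $\varepsilon'$ (a small fraction of $\varepsilon$), and let $\xi'=\xi'(\varepsilon',k-1)$ be the constant provided by the inductive hypothesis. If some $d_{ij}<\delta$ then $|S|\le e(V_i,V_j)\prod_{l\neq i,j}n_l\le(\delta+\xi)\prod_l n_l$ while $\prod_{i'<j'}d_{i'j'}\le\delta$, so both quantities lie within $\delta+\xi\le\varepsilon$ of each other; hence assume $d_{ij}\ge\delta$ for all $i<j$. Writing $|S|=\sum_{v\in V_k}c(v)$ with $c(v)$ the number of $(k-1)$-cliques transversal to the neighbourhoods $N_i(v):=N_G(v)\cap V_i$ ($i\le k-1$), I call $v$ \emph{typical} if $|N_i(v)|=(d_{ik}\pm\xi)n_i$ for all $i\le k-1$. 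Since a set of $\ge\xi n_k$ vertices of $V_k$ cannot have density into some $V_i$ outside $[d_{ik}-\xi,d_{ik}+\xi]$, all but at most $2(k-1)\xi n_k$ vertices are typical. For a typical $v$, each $N_i(v)$ has size at least $(\delta-\xi)n_i$, so the restriction of each regular pair $(V_i,V_j)$ to $(N_i(v),N_j(v))$ is again regular, with density $d_{ij}\pm O(\xi)$ and regularity parameter $O(\xi/\delta)$; taking $\xi$ small compared to $\xi'\delta$ makes the latter at most $\xi'$, and the inductive hypothesis gives $c(v)=\bigl(\prod_{i<j\le k-1}d_{ij}\pm O_k(\xi)\pm\varepsilon'\bigr)\prod_{i\le k-1}|N_i(v)|$. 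Substituting $|N_i(v)|=(d_{ik}\pm\xi)n_i$ and using $\prod_{i<j\le k-1}d_{ij}\cdot\prod_{i\le k-1}d_{ik}=\prod_{1\le i<j\le k}d_{ij}$ turns this into $c(v)=\bigl(\prod_{1\le i<j\le k}d_{ij}\pm O_k(\xi)\pm 2\varepsilon'\bigr)\prod_{i\le k-1}n_i$. Atypical vertices contribute at most $\prod_{i\le k-1}n_i$ each, hence at most $2(k-1)\xi\prod_{i\le k}n_i$ in total; summing over the $(1\pm O_k(\xi))n_k$ typical vertices and absorbing the atypical part yields $|S|=\bigl(\prod_{1\le i<j\le k}d_{ij}\pm O_k(\xi)\pm 2\varepsilon'\bigr)\prod_{i\le k}n_i$. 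Choosing first $\varepsilon'$ and then $\xi$ small enough (in terms of $\varepsilon$ and $k$ only, through $\delta$ and $\xi'$) makes the total error at most $\varepsilon$, completing the induction.

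No genuine difficulty arises — this is a classical lemma — but two points need attention. First, $\xi$ is required to be chosen before $d$, so it must be uniform in the densities; this is why I peel off the case $\min_{i<j}d_{ij}<\delta$ (where the conclusion is trivial) and, in the main case, use that each restricted pair $(N_i(v),N_j(v))$ loses only a factor $O(1/\delta)$ in its regularity parameter. Second, the $(k-1)$-cliques inside the neighbourhoods of $v$ live in sets of slightly unequal sizes and with slightly perturbed pairwise densities, which is precisely why the statement is proved in the strengthened form with arbitrary $n_i$ and $d_{ij}$ — otherwise the inductive step would not be applicable to the $N_i(v)$. The rest is routine bookkeeping of the accumulated $O_k(\xi)$ and $\varepsilon'$ errors.
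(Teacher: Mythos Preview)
The paper does not actually prove Lemma~\ref{lem:graphcount}; it is stated in Section~\ref{sec:reg} as a known classical result (the graph counting lemma) and invoked as a black box. Your proposal supplies the standard proof by induction on $k$, with the usual strengthening to arbitrary part sizes $n_i$ and pairwise densities $d_{ij}$ so that the inductive step (restricting to neighbourhoods of a typical vertex) goes through. The argument is correct, and the two technical issues you flag --- uniformity of $\xi$ in the densities via the threshold $\delta$, and the need for the strengthened statement --- are exactly the points that require care. There is nothing to compare against in the paper itself.
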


\section{Proof of density theorems}\label{sec:density}

\subsection{Overview of the proofs}

The proofs of Theorems~\ref{thm:dens14} and~\ref{thm:denspi} follow a similar outline. In order to prove the lower bounds on $\pu(F)$, we will construct a locally $1/4$-dense sequence of hypergraphs $H_n$ in which no pair of vertices is the head of some edge and the tail of another, or a locally $\pi_r$-dense sequence $H_n$ where every pair of edges intersecting in two vertices admits a consistent descriptive sequence. In either case, $F$ cannot be a subgraph of $H_n$.

Let $H_n$ be a locally $(1/4+\varepsilon)$-dense or $(\pi_r+\varepsilon)$-dense sequence of hypergraphs, respectively. Our goal is to show that $H_n$ contains $F$ as a subgraph.

\textbf{Step 1:} Apply the hypergraph regularity lemma to some large enough hypergraph $H$ in the sequence to obtain a regular family of partitions $\fP$. Denote the parts in $\fP^{(1)}$ as $V_1, \dots, V_q$. 

\textbf{Step 2:} Remove from $H$ all edges contained in polyads over which $H$ is not regular or where the density of $H$ w.r.t. the polyad is less than $\varepsilon/100$, and let $H'$ be the result. This guarantees that not too many edges are removed, and in particular only a tiny proportion of $r$-tuples of parts have decreased their density by more than $\varepsilon/50$. There exists a subset of $q_1$ parts, w.l.o.g. $V_1, \dots, V_{q_1}$, such that in every $r$-tuple of them the density of $H\setminus H'$ is at most $\varepsilon/50$. Restrict $H'$ and $\fP$ to these parts.

\textbf{Step 3:} There cannot be a large collection of vertex classes where the density of every $r$-tuple among them is less than $1/4+2\varepsilon/3$ or $\pi_r+2\varepsilon/3$, otherwise $H$ would not have the required local density. By Ramsey's theorem, there exists a subset of $q_2$ vertex sets, w.l.o.g. $V_1, \dots, V_{q_2}$, in which every $r$-tuple of them has density at least $1/4+2\varepsilon/3$ or $\pi_r+2\varepsilon/3$ in $H$, and density at least $1/4+\varepsilon/2$ or $\pi_r+\varepsilon/2$ in $H'$.

\textbf{Step 4:} Define an auxiliary $(q_2,r)$-reduced hypergraph $R$, where each vertex represents one part of $\fP^{(2)}$, and where an $\binom{r}{2}$-tuple of vertices forms an edge if they form the second polyad of an edge $e$ in $H'$. Then $R$ has density at least $1/4+\varepsilon/4$ or $\pi_r+\varepsilon/4$, respectively.

\textbf{Step 5:} Use Lemma~\ref{lem:red14} or Lemma~\ref{lem:redpi} to find a subset of $\ell=|V(F)|$ indices where every $2r-2$-tuple of indices in $R$ admits some descriptive sequence $\sigma=(s_1, s_2, \dots, s_{2r-2})$. In the case of Lemma~\ref{lem:red14} this sequence is $XX\dots XZZYY\dots Y$, while in the case of Lemma~\ref{lem:redpi} $\sigma$ is some inconsistent descriptive sequence. In either case, $F$ admits this descriptive sequence.

\textbf{Step 6:} Take a function $f_1$ that assigns one remaining part of $\fP^{(1)}$ to each vertex of $F$, in an order that is consistent with the descriptive sequence $\sigma$. For every twin pair of edges $e,e'$, consider the image under $f_1$ of their $2r-2$ vertices. Consider edges $g_e,g_{e'}$ of $R$ which certify that the corresponding $2r-2$-tuple of indices admits the descriptive sequence $\sigma$. Let $h_e,h_e'$ be edges in $H'$ corresponding to $g_e,g_{e'}$. Use these edges to construct the functions $f_2, f_3, f_4, \dots, f_r$ from the statement of the counting lemma, and apply the lemma to conclude the existence of a copy of $F$ in $H$.

\subsection{Proof of Theorem~\ref{thm:dens14}}

Fix a quasi-linear, head-tail-mixing $r$-graph $F$, which admits the descriptive sequence $XX\dots XZZYY\dots Y$. We will start by showing that $\pi_u(F)\geq 1/4$. To do this we will show that there exists a sequence of hypergraphs $H_n$ which is locally $1/4$-dense and which does not contain $F$ as a subgraph.

Take $[n]$ as the vertex set of $H_n$. Start by randomly coloring each pair of vertices in the colors red and blue. Then, for each $r$-tuple of vertices $v_1<v_2<\dots<v_r$, we place an edge of $H_n$ there if the pair $v_1v_2$ (that is, the head) is red and the pair $v_{r-1}v_r$ (the tail) is blue. It is clear that there is no pair of vertices that is both the head of some edge of $H_n$ and the tail of another, and as such $H_n$ cannot contain a head-tail-mixing hypergraph like $F$ as its subgraph.

To complete the upper bound, fix $\delta>0$. We will show that, with high probability, the hypergraph $H_n$ is $(1/4,\delta,\points)$-dense, by showing that the probability that a subset $S$ of $\delta n$ vertices has density less than $1/4-\delta$ is $o(2^{-n})$. As the pair coloring is random, the probability that each individual $r$-tuple becomes an edge of $H_n$ is $1/4$. Moreover, given a family of $r$-tuples in which no pair of them shares two vertices, the events in which each $r$-tuple becomes an edge of $H_n$ are independent.

By a theorem of R\"odl~\cite{Rod85}, for $n$ large enough it is possible to find $(\delta n)^2/(2r^2)$ $r$-tuples of vertices in $S$ which pairwise intersect in at most one vertex. Let $L$ be such a family, sampled uniformly at random. Let $E$ be the event in which less than a $1/4-\delta/2$ proportion of the $r$-tuples of $L$ become edges of $H_n$. On the one hand, if we first fix $L$ and then run the random coloring of pairs of vertices, by Chernoff's bound the probability of $E$ is at most $2^{-O(n^2)}$. On the other hand, suppose that the coloring procedure is run first. Let $P$ be the event in which $S$ has density lower than $1/4-\delta$. Conditioned on $P$, the expected proportion of $r$-tuples of $L$ which become edges of $H_n$ is at most $1/4-\delta$, since by symmetry each $r$-tuple has the same probability of being included in $S$. By Markov's inequality, the probability of $E$ conditioned on $P$ is at least $1-\frac{1/4-\delta}{1/4-\delta/2}\geq\delta$. Thus \[\Pr(E)\geq\Pr(P)\Pr(E|P)\quad\Longrightarrow\quad\Pr(P)\leq\frac{\Pr(E)}{\Pr(E|P)}=2^{-O(n^2)}.\] This concludes the proof of the upper bound $\pu(F)\geq 1/4$.

Next we will show that $\pu(F)\leq 1/4$. Let $\ell=|V(F)|$. Fix $\varepsilon>0$, and let $\{H_n\}_{n=1}^\infty$ be a locally $(1/4+\varepsilon)$-dense sequence of $r$-graphs. Our goal is to show that $F$ is a subgraph of $H_n$ for some $n$.

There will be several parameters involved in this proof. Describing them as a hierarchy is complicated by the fact that several inputs of the hypergraph regularity lemma are functions rather than constants. Among the ones that do behave like constants, the hierarchy is as follows:

\[r,\varepsilon^{-1},\ell\ll q_2,\mu\ll q_1\ll\xi_r\ll t, n_0, N, \delta^{-1}. \]

The specific relation between them, and with the functions $\xi_2, \xi_3, \dots, \allowbreak\xi_{r-1}, \beta$ will be described next. Our inputs consist of a value $\varepsilon>0$ and an $r$-graph $F$. Let $\ell=|V(F)|$. Let $q_2$ be the value of $k$ obtained from Lemma~\ref{lem:red14}, with inputs $\varepsilon/4$, $r$ and $\ell$. Let $\mu=\lceil4r^r/\varepsilon\rceil$, and $q_1$ be the Ramsey number $R_r(\max\{q_2,\mu\},2)$. Let $\xi_r=\frac{\varepsilon}{100q_1^r}$. We set $d_r=\varepsilon/100$, and then on Theorem~\ref{thm:count} with inputs $r,\ell,d_r$ we obtain a function $\xi_{r-1}=\xi_{r-1}(a_{r-1})$, then a function $\xi_{r-2}=\xi_{r-2}(a_{r-1}, a_{r-2})$, and so on, until we obtain functions $\xi_3=\xi_3(a_{r-1}, a_{r-2}, \dots, a_3)$, and then $\xi'_2,\beta,m_0=\xi_2',\beta,m_0(a_{r-1}, a_{r-2}, \dots, a_2)$, where $\xi'_2$ is the value $\xi_2$ from Theorem~\ref{thm:count} (we rename it because it will not match the one used as an input in Theorem~\ref{thm:regularity}). Let $\xi''_2(a_{r-1}, a_{r-2}, \dots, a_2)$ be the value of $\delta$ obtained from Lemma~\ref{lem:graphcount} on parameters $r$ and $\varepsilon/(4a_2^{\binom{r}{2}})$. Let $\xi_2=\min\{\xi_2',\xi_2''\}$. Next we apply Theorem~\ref{thm:regularity} with parameters $r,1/q_1,\xi_r,\allowbreak\beta,\allowbreak\xi_2,\xi_3,\dots,\xi_{r-1}$ to obtain $t$ and $n_0$. Finally we take $M=\max\left\{m_0(\vec a):a\in[t]^{r-2}\right\}$, $\delta=\min\{1/(2t),\varepsilon/100\}$ and $N=\max\{2n_0, t!, 2tM\}$. Choose $\tau$ such that $H_\tau$  has at least $N$ vertices and is $(1/4+\varepsilon,\delta,\points)$-dense.

Let $n'$ be the number of vertices of $H_\tau$, and let $n$ be the largest multiple of $t!$ less than or equal to $n'$. Since $n'\geq N\geq t!$, we have $n\geq n'/2$. Select a subset of $n$ vertices from $H_\tau$, and let $H$ be the hypergraph induced by $H_\tau$ on those vertices. Since $H_\tau$ is $(1/4+\varepsilon, \delta, \points)$-dense, $H$ is $(1/4+\varepsilon, 2\delta, \points)$-dense.

Since $n$ is at least $n_0$ and is divisible by $t!$, we can use the regularity lemma on $H$. Applying Theorem~\ref{thm:regularity} with parameters $r,1/q_1,\xi_r,\allowbreak\beta,\xi_2,\xi_3,\dots,\xi_{r-1}$, we obtain an $(r-1, \vec a)$-nested family of partitions $\fP$, for some vector $\vec a$, such that 

\begin{itemize}
    \item $\fP$ is $(1/q_1, \vec\xi(\vec a), \vec a, \beta(\vec a))$-equitable and $t$-bounded, and
    \item $H$ is $(*, \xi_r, \beta(\vec a))$-regular w.r.t. $\fP$.
\end{itemize}

Remove from $H$ all edges $e$ such that $H$ is not $(*,\xi_r,\beta(\vec a))$-regular over its polyad $\hat\cP^{(r-1)}(e)$, or if the density of $H$ over $\hat\cP^{(r-1)}(e)$ is less than $\varepsilon/100$, to obtain the hypergraph $H'$. Let $V_1, V_2, \dots, V_{a_1}$ be the classes of the partition $\fP^{(1)}$, and let $z=n/a_1$ be the size of each set $V_i$. 

We claim that the number of $r$-tuples of parts $V_{i_1}, V_{i_2}, \dots, V_{i_r}$ such that $H\setminus H'$ has edge density at least $\varepsilon/50$ on the $r$-partite graph between them is less than $\binom{a_1}{r}/\binom{q_1}{r}$. Indeed, within each $r$-tuple of parts the number of edges removed because they belong to sparse polyads is at most $\varepsilon z^r/100$, so to reach density $\varepsilon/50$ in $H\setminus H'$ we need to have at least $\varepsilon z^r/100$ edges in irregular polyads. Since $H$ is $(*,\xi_r, \beta(\vec a))$-regular over $\cP$, there are at most $\xi_rn^r= \xi_ra_1^rz^r$ edges of $H$ belonging to irregular polyads. The number of $r$-tuples of parts containing at least $\varepsilon z^r/100$ of these edges is at most $\xi_ra_1^rz^r/(\varepsilon z^r/100)= (a_1/q_1)^r<\binom{a_1}{r}/\binom{q_1}{r}$.

Select a set $I_1\subseteq [a_1]$ of size $q_1$ uniformly at random (we know that $a_1\geq q_1$ because $\fP$ is $(1/q_1, \vec\xi(\vec a), \vec a, \beta(\vec a))$-equitable, where the first parameter means that $|\cross_r(\fP^{(1)})|\geq \binom{n}{r}/q_1$, which is impossible if $\fP^{(1)}$ has fewer than $q_1$ parts). The expected number of $r$-tuples $i_1, i_2, \dots, i_r$ in $I_1$ such that $H\setminus H'$ has edge-density at least $\varepsilon/100$ on the corresponding parts $V_i$ is less than 1, therefore there exists a set $I_1$ of size $q_1$ in which the density of $H\setminus H'$ in each corresponding $r$-tuple of parts is less than $\varepsilon/100$. W.l.o.g., we can assume that $I_1=[q_1]$.

Consider an auxiliary complete $r$-graph on the vertex set $[q_1]$. We color an $r$-tuple of elements in red if the edge density of $H$ in the corresponding $r$-tuple of parts is at least $1/4+2\varepsilon/3$, and blue otherwise. Since $q_1=R_r(\max\{q_2, \mu\},2)$, there is either a red clique of size $q_2$ or a blue clique of size $\mu$. We claim that the blue clique is impossible.  Indeed, consider the set $W$ formed by the union of the vertex sets $V_i$ with $i$ in this blue clique. Its size is $|W|=\mu z\geq z=n/a_1\geq n/t\geq 2\delta n$, so from the fact that $H$ is $(1/4+\varepsilon,2\delta, \points)$-dense, and $\delta\geq \varepsilon/100$, the edge-density of $H$ on the set $W$ is at least $1/4+49\varepsilon/50$. On the other hand, every $r$-tuple of parts in $W$ has edge-density at most $1/4+2\varepsilon/3$, and the number of edges in $W$ with two vertices in the same part is at most $\mu^{r-1}z^r$. But that means that the number of edges of $H$ in $W$ is at most $(1/4+2\varepsilon/3)\binom{\mu z }{r}+\mu^{r-1}z^r\leq (1/4+2\varepsilon/3)\binom{\mu z}{r}+r^r\binom{\mu z}{r}/\mu<(1/4+49\varepsilon/50)\binom{\mu z}{r}$, reaching a contradiction. There exists a red clique of size $q_2$, and let $I_2\subseteq I_1$ be the set of elements in the clique. Within each $r$-tuple of parts in $I_2$ the density of $H$ is at least $1/4+2\varepsilon/3$, and the density of $H'\setminus H$ is at most $\varepsilon/50$, so the density of $H'$ on each $r$-tuple is at least $1/4+\varepsilon/2$. W.l.o.g., we can assume that $I_2=[q_2]$.

Consider the partition $\fP^{(2)}$, which partitions $\cross_2(\fP^{(1)})\subseteq \binom{V(H)}{2}$. Because $\fP$ is $(r-1,\vec a)$-nested, for every pair of parts $(V_i,V_j)$, $\fP^{(2)}$ partitions the set of pairs of vertices between $V_i$ and $V_j$ into $a_2$ parts. We can think of each such part $\cP=\cP^{(2)}$ as a bipartite graph between $V_i$ and $V_j$. Moreover, since $\fP$ is $(1/q_1, \vec\xi(\vec a), \vec a, \beta(\vec a))$-equitable, each bipartite graph $\cP$ is $(1/a_2,\xi_2(\vec a))$-regular, in the sense of Lemma~\ref{lem:graphcount}.

We will define an auxiliary $(q_2,r)$-reduced graph $R$. Remember that a $(k,r)$-reduced graph is ar $\binom{r}{2}$-graph whose vertex set is partitioned into $\binom{k}{2}$ parts $W_{i,j}$, with $1\leq i<j\leq k$. Each edge has $\binom{r}{2}$ vertices, which lie in parts of the form $W_{i_1, i_2}, W_{i_1, i_3}, \dots, W_{i_r-1,i_r}$, for some $1\leq i_1<i_2<\dots<i_r\leq k$. The constituent $\mathcal{A}_{i_1, i_2, \dots, i_r}$ is the $\binom{r}{2}$-partite $\binom{r}{2}$-graph induced by $R$ on the vertex sets $W_{i_j,i_{j'}}$.

Let the vertex sets of $R$ be $W_{i,j}$ with $1\leq i<j\leq q_2$. Each $W_{i,j}$ will have size $a_2$, and the elements of $W_{i,j}$ will represent the $a_2$ partition classes $\cP\in\fP^{(2)}$ in which the bipartite graph between $V_i$ and $V_j$ gets split by $\fP^{(2)}$. Let $i_1, i_2,\dots, i_r$ be an $r$-tuple of elements of $[q_2]$, and for each $1\leq j<j'\leq r$ let $w_{j,j'}$ be a vertex in $W_{i_j,i_{j'}}$, and let $\cP_{j,j'}$ be the corresponding part of $\fP^{(2)}$. We define the edges of $R$ as follows: the $\binom{r}{2}$ vertices $w_{1,2}, w_{1,3}, \dots, w_{r-1,r}$ form an edge of $R$ if there exists an edge $e\in H'$ in which each pair of vertices belongs to one of the classes $\cP_{1,2}, \cP_{1,3}, \dots, \cP_{r-1,r}$.

Consider the constituent $\mathcal{A}_{1,2,\dots, r}$. Every edge of $e\in H'$ with one vertex in each of $V_1,V_2,\dots,V_r$ generates in $R$ an edge, which we will denote by $R(e)$, whose $\binom{r}{2}$ vertices correspond to the parts of $\fP^{(2)}$ containing each pair of vertices in $e$. There are at least $(1/4+\varepsilon/2)z^r$ edges in $H'$ between $V_1,V_2,\dots,V_r$. To bound the number of edges in $\mathcal{A}_{1,2,\dots,r}$, we will bound how many edges in $H'$ can generate the same edge of $R$.

Let $w_{1,2}, w_{1,3},\dots, w_{r-1,r}$ be vertices, with $w_{i,j}\in W_{i,j}$, and let $\cP_{i,j}$ be the part of $\fP^{(2)}$ corresponding to $w_{i,j}$. Treating each $\cP_{i,j}$ as a bipartite graph, each edge $e\in H'$ with $R(e)=w_{1,2}w_{1,3}\dots w_{r-1,r}$ corresponds to an $r$-clique in the $r$-partite graph formed by the union of the graphs $\cP_{i,j}$. Because each $\cP_{i,j}$ is $(1/a_2,\xi_2(\vec a))$, and $\xi_2(\vec a)\leq \xi''_2(\vec a)$, we can apply the graph counting lemma. By Lemma~\ref{lem:graphcount} with parameters $r$ and $\varepsilon/(4a_2^{\binom{r}{2}})$, the number of $r$-cliques in the union of the graphs $\cP_{i,j}$ is at most $z^r/a_2^{\binom{r}{2}}+\varepsilon z^r/(4a_2^{\binom{r}{2}})=(1+\varepsilon/4)z^r/a_2^{\binom{r}{2}}$.

Because $H$ has at least $(1/4+\varepsilon/2)z^r$ edges between the sets $V_1, V_2, \dots, V_r$, and each edge of the constituent $\mathcal{A}_{1,2,\dots, r}$ corresponds to at most $(1+\varepsilon/4)z^r/a_2^{\binom{r}{2}}$ of those edges, the number of edges in $\mathcal{A}_{1,2,\dots,r}$ is at least $(1/4+\varepsilon/2)a_2^{\binom{r}{2}}/(1+\varepsilon/4)\geq (1/4+\varepsilon/4)a_2^{\binom{r}{2}}$. In other words, the constituent has edge density at least $1/4+\varepsilon/4$. The same holds true for all other constituents, so the $(q_2,r)$-reduced graph $R$ is $(1/4+\varepsilon/4)$-dense.

By the choice of $q_2$, by Lemma~\ref{lem:red14} there exists a set $I_3$ of $\ell$ indices in $I_2=[q_2]$ such that every $2r-2$-tuple of indices in $I_3$ admits the descriptive sequence $XX\dots XZZYY\dots Y$. W.l.o.g., we can assume that $I_3=[\ell]$. The hypergraph $F$ admits $XX\dots XZZYY\dots Y$, so there exists an order $v_1, v_2,\dots, v_\ell$ of its vertex set in which $XX\dots XZZYY\dots Y$ describes every pair of edges with intersection size two. To conclude the proof of Theorem~\ref{thm:dens14}, our goal is to use the hypergraph counting lemma (Theorem~\ref{thm:count}) to prove that $H$ contains $F$ as a subgraph.

$F$ is quasi-linear, meaning that non-twin pairs of edges intersect in at most one vertex. Any twin pair of edges $e,e'$ has the form $e=v_{i_1}, v_{i_2}, \dots, v_{i_r}$ and $e'=v_{i_{r-1}}, v_{i_r}, \dots, v_{i_{2r-2}}$, with $i_1\leq i_2\leq\dots\leq i_{2r-2}$. The set of $2r-2$ indices $\{i_1, \dots, i_{2r-2}\}$ admits the descriptive sequence $XX\dots XZZYY\dots Y$, so in $R$ there exist edges $g_e\in\mathcal{A}_{i_1, i_2, \dots, i_r}$ and $g_{e'}\in\mathcal{A}_{i_{r-1}, i_r, \dots, i_{2r-2}}$ which intersect in one vertex $w_{i_{r-1},i_r}\in W_{i_{r-1}, i_r}$. There exist edges $h_e,h_{e'}\in H'$ such that $R(h_e)=g_e$ and $R(h_{e'})=g_{e'}$. Each vertex $v_{i_j}$ in $e$ corresponds to a vertex in $h_e$ or $h_e$, namely the one contained in $V_{i_j}$. Note that $h_e$ and $h_{e'}$ each have one vertex in each of $V_{i_{r-1}}$ and $V_{i_r}$, and both pairs of vertices belong to the same class of $\fP^{(2)}$, the one corresponding to $w_{i_{r-1}, i_r}$.

We are now ready to define the functions $f_i:F_{<i>}\rightarrow\fP^{(i)}$ with $1\leq i\leq r-1$ from Theorem~\ref{thm:count}. For any $i$-tuple of vertices $S$ in $F_{<i>}$, choose an edge $e$ containing these vertices. Then $f_i(S)$ is the part of $\fP^{(i)}$ containing the corresponding $i$ vertices in $h_e$. We define $f_r(e)=\hat\fP^{(r-1)}(h_e)$ for each $e\in E(F)$.

We begin by observing that the functions $f_i$ are all well-defined. The image of each vertex $v_i$ is the part $V_i$. If a pair of vertices $v_i,v_j$ is contained in two edges $e,e'$, then $e$ and $e'$ must form a twin pair, in which case the part of $\fP^{(2)}$ containing the corresponding pair of vertices in $h_e$ and $h_e'$ is the same. There is no set of three or more vertices contained in more than one edge. In addition, if $2\leq i\leq r-1$, and $S$ is a set of $i$ vertices, we have that the $(i-1)$-th polyad of $f_i(S)$ is precisely $\cup_{v\in S}f_{i-1}(S-v)$, and we also have $f_r(e)=\hat \fP^{(r-1)}(h_e)=\cup_{v\in e}\cP^{(r-1)}(e-v)=\cup_{v\in e}f_{r-1}(e-v)$ for all $e\in E(F)$. Finally, because each $h_e$ belongs to $H'$, and in particular it was not removed from $H$ when creating $H'$, it means that $H$ is $(d,\xi_r,\beta(\vec a))$-regular w.r.t. $f_r(e)$ for some $d\geq \varepsilon/100$. Observe also that the size of each vertex set $V_i$ is at least $m_0(\vec a)$, because $z=n/a_1\geq n'/2a_1\geq N/2t\geq M\geq m_0(\vec a)$. We can apply Theorem~\ref{thm:count} to deduce that $H$ contains $F$ as a subgraph, and so does $H_\tau$, as we wanted to show. That completes the proof of Theorem~\ref{thm:dens14}.

\subsection{Proof of Theorem~\ref{thm:denspi}}

Let $F$ be a quasi-linear, inconsistent $r$-graph which admits all inconsistent descriptive sequences of order $r$. We will start by showing that $\pu(F)\geq\pi_r$, by constructing a locally $\pi_r$-dense sequence of $r$-graphs $\{H_n\}_{n=1}^\infty$ which are $F$-free. We will take $[n]$ as the vertex set of $H_n$. Let $\varphi_n:\binom{[n]}{2}\rightarrow \binom{[r]}{2}$ be a random function, i.e., a uniformly random coloring of the pairs of vertices of $H_n$ with the set of colors $\binom{[r]}{2}$. The edges of $H_n$ will be precisely the $r$-tuples of vertices $1\leq v_1< v_2<\dots<v_r\leq n$ satisfying that $\varphi_n(\{v_i,v_j\})=\{i,j\}$ for all $1\leq i<j\leq n$.

For each $\delta>0$ we will first show that, with high probability, $H_n$ is $(\pi_r, \delta, \points)$-dense. Let $S$ be a set of $\delta n$ vertices from $[n]$. We will show that $S$ has edge-density at least $\pi_r-\delta$ with probability $1-o(2^{-n})$. 

By a theorem of R\"odl~\cite{Rod85}, for $n$ large enough it is possible to find $(\delta n)^2/(2r^2)$ $r$-tuples of vertices in $S$ pairwise intersecting in at most one vertex. Let $L$ be such a family, sampled uniformly at random. For each individual edge $e$ in $L$, the probability that $\varphi_n$ induces on $e$ the unique coloring that makes $e$ an edge of $H_n$ is $\binom{r}{2}^{-\binom{r}{2}}=\pi_r$, and the events are mutually independent among the edges of $L$. Let $E$ be the event that fewer than $(\pi_r-\delta/2)|L|$ elements of $L$ become edges of $H_n$. By Chernoff's bound, the probability of $E$ is at most $2^{-O(n^2)}$. On the other hand, suppose that $\varphi_n$ is sampled first, and $L$ is sampled later. Let $P$ be the event that $S$ has edge-density lower than $\pi_r-\delta$. By symmetry, each $r$-tuple of vertices in $S$ is included in $L$ with equal probability, so by linearity of expectation, the expected proportion of elements of $L$ which are edges of $H_n$ equals the edge-density of $S$. By Markov's inequality, the probability of $E$ conditioned on $P$ is at least $1-\frac{\pi_r-\delta}{\pi_r-\delta/2}$. Thus
\[\Pr(E)\geq\Pr(P)\Pr(E|P)\quad\Longrightarrow\quad\Pr(P)\leq\frac{\Pr(E)}{\Pr(E|P)}=2^{-O(n^2)}=o(2^{-n}).\]

On the other hand, we can show that $F$ is not a subgraph of $H_n$ for any $n$. Indeed, suppose that there is a copy of $F$ in $H_n$, and consider the vertex-order $\preceq$ of $V(F)$ induced by the natural order of $V(H_n)=[n]$. Since $F$ is inconsistent, the order $\preceq$ is not a consistent order in $F$, and there exists a twin pair of edges $e, e'$ such that $e\cap e'$ plays different roles in $e$ and $e'$. On the other hand, let $u$ and $v$ be the vertices of $H_n$ corresponding to $e\cap e'$. By construction of $H_n$, if $\varphi_n(u,v)=\{i,j\}$, then the pair $\{u,v\}$ plays the role $\{i,j\}$ in all the edges containing both $u$ and $v$, yielding a contradiction. We conclude that the sequence $\{H_n\}_{n=1}^\infty$ is $F$-free. This proves that $\pu(F)\geq \pi_r$.

The proof of the upper bound $\pu(F)\leq\pi_r$ is analogous to that of Theorem~\ref{thm:dens14}. The main difference is that, once we construct the $(q_2,r)$-reduced graph $R$, we apply Lemma~\ref{lem:redpi} rather than Lemma~\ref{lem:red14}. We obtain a collection $I_3$ of $\ell$ indices and an inconsistent descriptive sequence $\sigma=(s_1, s_2, \dots, s_{2r-2})$ such that all $2r-2$-tuples from $I_3$ admit $\sigma$. Since $F$ admits all inconsistent descriptive sequences, in particular $\sigma$, there exists an order $v_1, v_2, \dots, v_\ell$ of $V(F)$ in which every twin pair of edges of $F$ is described by $\sigma$. From this point, we can finish as in the proof of Theorem~\ref{thm:dens14}.

\section{Concluding remarks}\label{sec:concluding}

In~\cite{ReiRS18}, Reiher, R\"{o}dl and Schacht characterized the $3$-graphs $F$ with $\pu(F)=0$. Their characterization can be described as follows: there exists an ordering $\preceq$ of $V(F)$, and a function $\varphi:\binom{V(F)}{2}\rightarrow\binom{[3]}{2}$ such that every pair of vertices $u,v$ plays the role $\varphi(uv)$ in every edge containing both $u$ and $v$.

A consequence of this characterization is that every $3$-graph $F$ has either $\pu(F)=0$ or $\pu(F)\geq 1/27$. To see this, consider a random function $\varphi_n:\binom{[n]}{2}\rightarrow \binom{[3]}{2}$, and let $H_n$ be the $3$-graph on vertex set $[n]$ where the edges are the triples $1\leq u<v<w\leq n$ with $\varphi_n(uv)=\{1,2\}$, $\varphi_n(uw)=\{1,3\}$ and $\varphi_n(vw)=\{2,3\}$ (this is the construction from the lower bound of Theorem~\ref{thm:denspi} in the particular case $r=3$). We showed that with high probability this sequence is locally $1/27$-dense. If $F$ does not appear as a subgraph of any $H_n$, then by definition $\pu(F)\geq 1/27$, whereas if $F$ appears as a subgraph, then $F$ satisfies the property from the characterization, so $\pu(F)=0$. Together with the construction of a $3$-graph $F$ with $\pu(F)=1/27$ found in~\cite{GarKL24}, we conclude that $1/27$ is the minimum positive value of the uniform Tur\'an density of $3$-graphs.

We conjecture that a similar characterization holds for $r$-graphs for general $r$.

\begin{conjecture}\label{conj:dens0}
    Let $F$ be an $r$-graph. Then $\pu(F)=0$ if and only if there exists an ordering $\preceq$ on $V(F)$ and a function $\varphi:\binom{V(F)}{2}\rightarrow\binom{[r]}{2}$ with the property that, for each pair of vertices $u,v$ and every edge $e$ containing $u$ and $v$, the pair $\{u,v\}$ plays the role $\varphi(uv)$ in $e$.
\end{conjecture}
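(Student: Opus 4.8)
This statement is a conjecture, so I outline a plan of attack, noting which half is already within reach of the methods developed above and which half is the genuine difficulty. The direction \emph{$\pu(F)=0\Rightarrow$ the structure exists} is essentially a corollary of the lower-bound construction in the proof of Theorem~\ref{thm:denspi}. Suppose no ordering $\preceq$ and function $\varphi$ as in the statement exist; I will produce an $F$-free, locally $\pi_r$-dense sequence, which gives $\pu(F)\ge\pi_r>0$. Take the random $r$-graphs $H_n$ on $[n]$ with edges the $r$-tuples $v_1<\dots<v_r$ satisfying $\varphi_n(\{v_i,v_j\})=\{i,j\}$ for all $i<j$, where $\varphi_n\colon\binom{[n]}{2}\to\binom{[r]}{2}$ is uniformly random. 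We showed there that this sequence is locally $\pi_r$-dense with high probability, and it is $F$-free for every $n$ and every choice of $\varphi_n$: a copy of $F$ in $H_n$, together with the order its vertices inherit from $[n]$ and the pullback of $\varphi_n$, would exhibit exactly the forbidden structure. Hence such a sequence exists, and in fact $\pu(F)\ge\pi_r$, matching the conjectured jump.

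The direction \emph{the structure exists $\Rightarrow\pu(F)=0$} is the substantial one, and for $r=3$ it is the theorem of Reiher, R\"odl and Schacht~\cite{ReiRS18}. Fix an ordering $v_1\prec\dots\prec v_\ell$ of $V(F)$ and a function $\varphi$ as in the statement; one must show that every locally $\varepsilon$-dense sequence of $r$-graphs eventually contains $F$, for every $\varepsilon>0$. The plan is to run the six-step scheme of Section~\ref{sec:density} up to and including the construction of the $(q_2,r)$-reduced hypergraph $R$, which in this setting has density at least $\varepsilon/4$, and then to replace Lemmas~\ref{lem:red14} and~\ref{lem:redpi} by an \emph{embedding lemma} of the following shape. \textit{For all $\varepsilon,r,\ell$ there is $k$ such that every $(k,r)$-reduced hypergraph $R$ of density at least $\varepsilon$ admits an embedding of the reduced skeleton of $F$: indices $j_1<\dots<j_\ell$ in $[k]$ and, for each pair $\{v_p,v_q\}$ of $V(F)$ contained in some edge, a vertex $\omega_{p,q}\in V_{j_p,j_q}$, such that for every edge $\{v_{i_1},\dots,v_{i_r}\}$ of $F$ with $i_1<\dots<i_r$ the set $\{\omega_{i_a,i_b}:1\le a<b\le r\}$ is an edge of the constituent $\mathcal A_{j_{i_1},\dots,j_{i_r}}$.} It is precisely the orderability of $F$ that makes the target class $V_{j_p,j_q}$ well-defined across all edges through $\{v_p,v_q\}$; for a non-orderable $F$ no such skeleton exists, matching the easy direction above. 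Granting such a lemma, Steps 5 and 6 proceed verbatim: pull the skeleton back through the regularity partition, define the maps $f_1,\dots,f_r$ as in the proof of Theorem~\ref{thm:dens14}, and invoke the counting lemma (Theorem~\ref{thm:count}) to conclude that $H$ contains $F$.

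The main obstacle is proving this embedding lemma when the only hypothesis is that the density is a positive $\varepsilon$. The signature/profile-plus-Ramsey arguments behind Lemmas~\ref{lem:red14} and~\ref{lem:redpi} reduce, after passing to a monochromatic set of indices, to forcing two ``used'' subsets of a single class $V_{i,j}$ to intersect, and for that they need the product of the relevant used-fractions to exceed $\tfrac14$ (respectively $\pi_r$) --- which is exactly why those thresholds appear. When the density is merely positive this mechanism fails: a greedy, edge-by-edge choice of the $\omega_{p,q}$ can get stuck, because a constituent of density $\varepsilon<\tfrac12$ can have its used set miss an $\varepsilon$-fraction of each class. One therefore needs a structurally stronger conclusion --- morally, that on a large sub-index-set a positive-density reduced hypergraph is ``complete enough'' on the coordinates relevant to an orderable pattern --- presumably via an iterative argument that maintains a large reservoir of admissible vertices rather than a single Ramsey application. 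On top of this sits the perennial $r\ge4$ difficulty: the reduced hypergraph records only the second layer $\fP^{(2)}$, so the embedding lemma must be strengthened to a form compatible with the hypotheses of the counting lemma, which may require taming the intermediate layers $\fP^{(3)},\dots,\fP^{(r-1)}$ in the spirit of Reiher's treatment of the first layer. I expect controlling all layers simultaneously, with no slack in the density, to be the crux, and the reason the conjecture remains open for $r\ge4$.
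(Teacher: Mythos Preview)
The statement is a \emph{conjecture}, and the paper does not prove it; there is therefore no proof to compare your proposal against. Your proposal is not a proof either, but a (sound) discussion of the landscape, and as such it is appropriate.

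Your analysis of the easy direction is correct and matches the paper's implicit reasoning in the concluding remarks: if no ordering and function $\varphi$ exist, then $F$ cannot embed in the random palette construction from the lower bound of Theorem~\ref{thm:denspi}, whence $\pu(F)\ge\pi_r>0$. This direction is indeed already established by the methods of the paper.

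For the hard direction, your plan (run Steps~1--4, then prove an embedding lemma for the reduced skeleton of an orderable $F$ in any $\varepsilon$-dense $(k,r)$-reduced hypergraph, then feed it into the counting lemma) is a natural line of attack and your discussion of the obstacles is accurate. It is worth noting that the paper's concluding remarks raise a closely related but sharper obstruction than the one you describe: the paper exhibits, for $r=4$, a $1/4$-dense $(k,4)$-reduced hypergraph that contains no $17$-blowup $\mathcal{P}[17]$ of any palette with positive density. This blocks the most natural strengthening of your embedding lemma (finding a full palette blowup rather than a single skeleton) and shows that the mechanism that worked for $r=3$ genuinely breaks. Your embedding lemma, being tailored to a single orderable $F$, is not directly ruled out by this example, but the example underscores your own point that one cannot hope for a clean structural conclusion on the reduced hypergraph and that new ideas beyond the signature/Ramsey template will be needed. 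Your final remark about the intermediate layers $\fP^{(3)},\dots,\fP^{(r-1)}$ is also to the point: for a general $F$ (not quasi-linear), the second-layer reduced hypergraph alone does not carry enough information to define $f_3,\dots,f_{r-1}$ in the counting lemma, and the paper flags precisely this issue when it discusses generalized palettes colouring higher tuples.
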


If Conjecture~\ref{conj:dens0} holds, then every $r$-graph with $\pu(F)>0$ satisfies $\pu(F)\geq\pi_r$. Indeed, we can construct a locally $\pi_r$-dense sequence of $r$-graphs as in the proof of the upper bound of Theorem~\ref{thm:denspi}, and once again any $F$ which appears as a subgraph satisfies $\pu(F)=0$. Therefore, Theorem~\ref{thm:mainpi} implies that $\pi_r$ is the minimum positive value for the uniform Tur\'an density of $r$-graphs.

More ambitiously, one could aim to prove an analogue of the main result of~\cite{Lam24}: the uniform Tur\'an density of an $r$-graph $F$ is the supremum of the densities of the generalized palette constructions not containing $F$ as a subgraph. For simplicity, we will focus on the case in which the edges of $F$ pairwise intersect in at most two vertices.

Let $\mathcal{C}$ be a finite set, whose elements we will call \emph{colors}, and let $\mathcal{A}\subseteq\mathcal{C}^{\binom{r}{2}}$ be a set, whose elements we call \emph{admissible patterns}. We call the pair $\mathcal{P}=(\mathcal{C},\mathcal{A})$ a \emph{palette}, and its density is $d(\mathcal{P})=|\mathcal{A}|/|\mathcal{C}|^{\binom{r}{2}}$. We can use the palette $\mathcal{P}$ to generate a random $r$-graph $H_n$ on the vertex set $[n]$ as follows. Take a random $\mathcal{C}$-coloring of the graph $K_n$, that is, a uniformly random function $\varphi:\binom{[n]}{2}\rightarrow\mathcal{C}$. For every $r$-tuple of vertices $1\leq v_1<v_2<\dots<v_r\leq n$, these vertices form an edge of $H_n$ iff $(\varphi(v_1v_2),\varphi(v_1v_3),\dots, \varphi(v_{r-1}v_r))\in\mathcal{A}$.

Using standard concentration inequalities, one can show that with high probability the generated sequence $\{H_n\}_{n=1}^\infty$ is locally $d(\mathcal{P})$-dense. Therefore, if $F$ is not contained in any hypergraph in the sequence, we have $\pu(F)\geq d(\mathcal{P})$. Notice that the lower bound constructions from Theorem~\ref{thm:dens14} and Theorem~\ref{thm:denspi} are particular cases of palettes. In the former case we can take $\mathcal{C}=\{\texttt{red},\texttt{blue}\}$ and $\mathcal{A}$ contains the $2^{\binom{r}{2}-2}$ vectors whose first entry is \texttt{red} and whose last entry is \texttt{blue}. In the latter case we have $\mathcal{C}=\binom{[r]}{2}$ and $\mathcal{A}$ only contains the admissible pattern $\{\{1,2\},\{1,3\},\dots,\{r-1,r\}\}$.

One can hope to prove that for every $r$-graph $F$ where edges intersect pairwise in at most two vertices, $\pu(F)$ is the supremum of the densities of palettes such that the randomly generated sequence $H_n$ does not contain $F$ as a subgraph. However, for any $r\geq 4$, attempting to mirror the proof from~\cite{Lam24} fails at a crucial step.

Let $\mathcal{P}=(\mathcal{C},\mathcal{A})$ be a palette, and let $k\geq r$ be a positive integer. We define the $k$-blowup of $\mathcal{P}$, denoted by $\mathcal{P}[k]$, as the following $(k,r)$-reduced hypergraph. Each vertex set $V_{i,j}$ with $1\leq i<j\leq k$ is a copy of $\mathcal{C}$. We denote the copy of the color $c\in\mathcal{C}$ in $V_{i,j}$ as $c^{i,j}$. For each $1\leq t_1<t_2<\dots<t_r\leq k$, the constituent $\mathcal{A}_{t_1,t_2,\dots,t_r}$ has exactly $|\mathcal{A}|$ edges: for each $(c_{1,2},c_{1,3},\dots,c_{r-1,r})\in\mathcal{A}$, the constituent contains the edge $(c_{1,2})^{t_1,t_2}(c_{1,3})^{t_1t_3}\cdots (c_{r-1,r})^{t_{r-1},t_r}$. The $(k,r)$-reduced graph $\mathcal{P}[k]$ is $d(\mathcal{P})$-dense.

Given a $(k,r)$-reduced graph $H$ on vertex sets $V_{i,j}$ and a $(k',r)$-reduced graph $H'$ on vertex sets $W_{i,j}$, a homomorphism from $H$ to $H'$ is a pair $(\varphi,f)$, where $\varphi$ is a function $\varphi:V(H)\rightarrow V(H')$ that maps edges to edges, and $f$ is a function $f:[k]\rightarrow [k']$ such that if $v\in V_{i,j}$ then $\varphi(v)\in W_{f(i),f(j)}$. The following (implicit) lemma is the basis of the main result from~\cite{Lam24}:

\begin{lemma}
    For every $\varepsilon>0$ and every $m$ there exists $k$ with the following property: every $d$-dense $(k,3)$-reduced graph admits a homomorphism from $\mathcal{P}[m]$, for some palette $\mathcal{P}$ with $d(\mathcal{P})\geq d-\epsilon$.
\end{lemma}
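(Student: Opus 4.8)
The plan is to extract, from a dense $(k,3)$-reduced graph $H$, a large "product-like" substructure on which the reduced graph behaves as a blowup of a palette. The natural quantity to control is, for each constituent $\mathcal{A}_{t_1,t_2,t_3}$, the set of edges it contains, viewed as triples of vertices $(v_{t_1,t_2},v_{t_1,t_3},v_{t_2,t_3})$. The issue is that the vertex sets $V_{i,j}$ and their internal structure can vary wildly between different pairs $\{i,j\}$. The first step is therefore to impose uniformity: fix a large auxiliary parameter, pass to a sub-collection of indices via Ramsey's theorem applied to an appropriate coloring of triples of $[k]$, so that all constituents on the surviving index set "look the same". Concretely, I would want to bound the number of vertices of $H$ that are actually used (incident to an edge of some relevant constituent); using a signature/profile argument in the spirit of the proofs of Lemmas~\ref{lem:red14} and~\ref{lem:redpi}, one can discretize the relevant densities and reduce to the case where these are essentially constant across the surviving index set.

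The heart of the argument is to realize the surviving structure as (a homomorphic image of) a palette blowup. Here I would let $\mathcal{C}$ be a common finite "coordinate set" indexing the vertices of each $V_{i,j}$ that get used, and define $\mathcal{A}\subseteq\mathcal{C}^3$ to be the set of color-patterns that appear as edges in a randomly (or typically) chosen constituent. The density of this palette should be at least $d-\varepsilon$ because, after the cleanup, each constituent has density at least $d-\varepsilon/2$ and contains roughly $|\mathcal{A}|$ of the $|\mathcal{C}|^3$ possible patterns. The homomorphism $(\varphi,f)$ from $\mathcal{P}[m]$ into $H$ is then built by: choosing $f:[m]\to[k]$ to be the inclusion of a size-$m$ sub-collection of surviving indices (this requires $k$ large, via Ramsey, so that $m$ indices survive with identical behavior); and defining $\varphi$ to send the copy $c^{i,j}$ of a color $c\in\mathcal{C}$ in $\mathcal{P}[m]$ to the vertex of $V_{f(i),f(j)}$ with coordinate $c$. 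One must check $\varphi$ maps edges to edges, i.e. every admissible pattern of $\mathcal{P}$ really appears in every constituent $\mathcal{A}_{f(t_1),f(t_2),f(t_3)}$ — this is exactly what the Ramsey-uniformization was designed to guarantee.

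The main obstacle is the tension between "$\mathcal{A}$ must be the same set for every surviving constituent" and "$\mathcal{A}$ must still be large". If one defines $\mathcal{A}$ as the intersection of the pattern-sets over all constituents, it could collapse; if one defines it as the union, the blowup of $\mathcal{P}$ will contain edges that are absent from some constituents, and $\varphi$ will fail to be a homomorphism. The resolution is the coloring used for Ramsey: color each triple $\{t_1,t_2,t_3\}\subseteq[k]$ by the pair consisting of (the identification of each used $V_{t_i,t_j}$ with a standard copy of $\mathcal{C}$) together with the resulting pattern-set in $\mathcal{C}^3$; the number of colors is bounded (finitely many choices of $\mathcal{C}$ up to size, finitely many identifications, finitely many subsets of $\mathcal{C}^3$), so for $k=R_3(m,\cdot)$ large enough we get $m$ indices on which \emph{both} the labeling and the pattern-set are literally constant. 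On that sub-collection $\mathcal{A}$ is simultaneously the pattern-set of every constituent, so it is large (density $\ge d-\varepsilon$) and $\varphi$ is a genuine homomorphism. A secondary technical point is handling vertices of $V_{i,j}$ not used by any relevant edge: these can simply be discarded before the identification with $\mathcal{C}$, at the (negligible, after discretization) cost of the $\varepsilon$ in the density.
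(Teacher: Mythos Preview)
First, note that the paper does not actually prove this lemma: it appears in the concluding remarks, labelled as an ``implicit'' result underlying \cite{Lam24}, with no argument given here. So there is no in-paper proof to compare your proposal against; I can only assess whether your outline stands on its own.

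There is a genuine gap at the Ramsey step. You propose to color each triple $\{t_1,t_2,t_3\}$ by (essentially) an identification of the used part of each $V_{t_i,t_j}$ with a standard set $\mathcal{C}$ together with the resulting pattern-set in $\mathcal{C}^3$, and then assert that ``the number of colors is bounded (finitely many choices of $\mathcal{C}$ up to size, finitely many identifications, finitely many subsets of $\mathcal{C}^3$)''. But $k$ must be fixed as a function of $\varepsilon$ and $m$ \emph{before} $H$ is revealed, and nothing in the definition of a $(k,3)$-reduced hypergraph bounds $|V_{i,j}|$. A $d$-dense constituent on parts of size $N$ typically uses $\Theta(N)$ vertices from each part, so $|\mathcal{C}|$, and hence your number of Ramsey colors, is unbounded in terms of $\varepsilon,m$. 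Your appeal to ``a signature/profile argument in the spirit of Lemmas~\ref{lem:red14} and~\ref{lem:redpi}'' does not close this: those arguments discretise the real numbers $|V_{t_i,t_j}\cap W|/|V_{t_i,t_j}|$ into $q+1$ buckets; they bound \emph{proportions}, not the cardinalities of the vertex sets, and give no handle on $|\mathcal{C}|$.

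What your outline is missing is a mechanism that replaces each $V_{i,j}$ by a set of size bounded purely in terms of $\varepsilon$, while keeping every constituent at density at least $d-\varepsilon/2$. Random sampling $q$ vertices from each $V_{i,j}$ has the right expectation, but a naive union bound over $\binom{k}{3}$ constituents forces $q$ to grow with $k$, while the subsequent Ramsey step needs $k\ge R_3(m,2^{q^3})$, which is circular; breaking that circularity is exactly the non-trivial content you have not supplied. A secondary issue: the identification $V_{i,j}\leftrightarrow\mathcal{C}$ must be fixed once per pair $\{i,j\}$, not chosen per triple, or $\varphi(c^{i,j})$ is ill-defined; so only the pattern-set in $\mathcal{C}^3$, not the identifications, should enter the triple-coloring.
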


The same lemma is false for $(k,4)$-reduced graphs: for every $k$, there exists a $1/4$-dense $(k,4)$-reduced graph $H$ which does not contain the $17$-blowup of any palette with positive density. Let each $V_{i,j}$ be a copy of $2^{[k]}$. For every $S\subseteq[k]$, we denote the copy of $S$ in $V_{i,j}$ as $S^{i,j}$. For every $1\leq t_1<t_2<t_3<t_4\leq k$, \[((S_1)^{t_1,t_2},(S_2)^{t_1,t_3}, (S_3)^{t_1,t_4},(S_4)^{t_2,t_3},(S_5)^{t_2,t_4},(S_6)^{t_3,t_4})\in H\quad\text{iff}\quad t_3\in S_1, t_4\notin S_1.\]

$H$ is $1/4$-dense because for any $t_1<t_2<t_3<t_4$ exactly one quarter of the sets in $2^{[k]}$ contain $t_3$ but not $t_4$. On the other hand, it is possible to show that $H$ does not contain $\mathcal{P}[17]$ for any palette $\mathcal{P}$ such that $\mathcal{A}$ is non-empty, using the fact that by the Erd\H{o}s-Szekeres theorem, the function $f$ in any homomorphism is monotone on a sequence of five indices. Examples like this are a natural obstacle to characterizing uniform Tur\'an density in terms of palettes for $r$-graphs with $r\geq 4$.

More generally, if the edges of $F$ are allowed to have arbitrary intersections, palettes should be modified to also color triples of vertices, quadruples of vertices, and so on up until $(r-1)$-tuples of vertices. In order to decide whether an $r$-tuple of vertices in $n$ is an edge of $H_n$ or not, we should consider the color pattern of its pairs of vertices, its triples, and so on up until $(r-1)$-tuples of vertices. In this case, $\mathcal{A}\subseteq\mathcal{C}^{2^r-r-2}$, where $2^r-r-2=\sum_{i=2}^{r-1}\binom{r}{i}$. If it holds that the uniform Tur\'an density of an $r$-graph $F$ is the supremum of the densities of palette constructions that do not contain $F$ as a subgraph, then Conjecture~\ref{conj:dens0} would follow.

\section*{Acknowledgments}

The author would like to thank Frederik Garbe, Daniel Il'kovi\v{c}, Dan Kr\'al' and Filip Ku\v{c}er\'ak for helpful discussions on the topic, and Hong Liu for his comments on an early version of this paper.

\bibliographystyle{abbrv}
\bibliography{bibliog.bib}

\begin{thebibliography}{10}

\bibitem{BarV98}
I.~B\'ar\'any and P.~Valtr.
\newblock A positive fraction {E}rd{\H{o}}s-{S}zekeres theorem.
\newblock {\em Discrete and Computational Geometry}, 19:335--342, 1998.

\bibitem{BucCKMM23}
M.~Buci\'c, J.~W. Cooper, D.~Kr\'al', S.~Mohr, and D.~Munh\'a~Correia.
\newblock Uniform tur\'an density of cycles.
\newblock {\em Transactions of the American Mathematical Society}, 376(7):4765--4809, 2023.

\bibitem{ErdH72}
P.~Erd\H{o}s and A.~Hajnal.
\newblock On {R}amsey like theorems. {P}roblems and results.
\newblock {\em Combinatorics (Proc. Conf. Combinatorial Math., Math. Inst.,)}, pages 123--140, 1972.

\bibitem{ErdS82}
P.~Erd{\H{o}}s and V.~T. S{\'o}s.
\newblock On {R}amsey-{T}ur{\'a}n type theorems for hypergraphs.
\newblock {\em Combinatorica}, 2(3):289--295, 1982.

\bibitem{ErdS46}
P.~Erd{\H o}s and A.~H. Stone.
\newblock On the structure of linear graphs.
\newblock {\em Bulletin of the American Mathematical Society}, 52(12):1087--1091, 1946.

\bibitem{FisG93}
P.~C. Fishburn and R.~L. Graham.
\newblock Lexicographic ramsey theory.
\newblock {\em Journal of Combinatorial Theory, Series A}, 62:280--298, 1993.

\bibitem{GarKL24}
F.~Garbe, D.~Kr{\'a}l', and A.~Lamaison.
\newblock Hypergraphs with minimum positive uniform {T}ur\'an density.
\newblock {\em Israel Journal of Mathematics}, 259(2):701--726, 2024.

\bibitem{GleKV16}
R.~Glebov, J.~Volec, and D.~Kr{\'a}l'.
\newblock A problem of {E}rd{\H{o}}s and {S}{\'o}s on 3-graphs.
\newblock {\em Israel Journal of Mathematics}, 211(1):349--366, 2016.

\bibitem{GunS17}
K.~Gunderson and J.~Semeraro.
\newblock Tournaments, 4-uniform hypergraphs, and an exact extremal result.
\newblock {\em Journal of Combinatorial Theory, Series B}, 126:114--136, 2017.

\bibitem{KohNRS10}
Y.~Kohayakawa, B.~Nagle, V.~R\"{o}dl, and M.~Schacht.
\newblock Weak hypergraph regularity and linear hypergraphs.
\newblock {\em J. Combin. Theory Ser. B}, 100:151--160, 2010.

\bibitem{Lam24}
A.~Lamaison.
\newblock Palettes determine uniform {T}ur\'an density.
\newblock Preprint. arXiv:2408.09643, 2024.

\bibitem{LamW24}
A.~Lamaison and Z.~Wu.
\newblock The uniform {T}ur\'an density of large stars.
\newblock Preprint. arXiv:2409.03699, 2024.

\bibitem{LinWZ23}
H.~Lin, G.~Wang, and W.~Zhou.
\newblock The minimum positive uniform {T}ur\'an density in uniformly dense $k$-uniform hypergraphs.
\newblock Preprint. arXiv:2305.01305, 2023.

\bibitem{Ram30}
F.~P. Ramsey.
\newblock On a problem of formal logic.
\newblock {\em Proceedings of the London Mathematical Society}, 2(1):264--286, 1930.

\bibitem{Rei20}
C.~Reiher.
\newblock Extremal problems in uniformly dense hypergraphs.
\newblock {\em European Journal of Combinatorics}, 88:103117, 2020.

\bibitem{ReiRS18}
C.~Reiher, V.~R{\"o}dl, and M.~Schacht.
\newblock Hypergraphs with vanishing {T}ur{\'a}n density in uniformly dense hypergraphs.
\newblock {\em Journal of the London Mathematical Society}, 97(1):77--97, 2018.

\bibitem{ReiRS18c}
C.~Reiher, V.~R{\"o}dl, and M.~Schacht.
\newblock On a generalisation of {M}antel's theorem to uniformly dense hypergraphs.
\newblock {\em International Mathematics Research Notices}, 16:4899--4941, 2018.

\bibitem{Rod85}
V.~R{\"o}dl.
\newblock On a packing and covering problem.
\newblock {\em European Journal of Combinatorics}, 6:69--78, 1985.

\bibitem{Rod86}
V.~R{\"o}dl.
\newblock On universality of graphs with uniformly distributed edges.
\newblock {\em Discrete Mathematics}, 59(1-2):125--134, 1986.

\bibitem{RodS07a}
V.~R\"odl and M.~Schacht.
\newblock Regular partitions of hypergraphs: counting lemmas.
\newblock {\em Combinatorics, Probability and Computing}, 16(6):887--901, 2007.

\bibitem{RodS07}
V.~R\"odl and M.~Schacht.
\newblock Regular partitions of hypergraphs: regularity lemmas.
\newblock {\em Combinatorics, Probability and Computing}, 16(6):833--885, 2007.

\bibitem{Tur61}
P.~Tur{\'a}n.
\newblock Research problems.
\newblock {\em Magyar Tud. Akad. Mat. Int. K\"ozl}, 6:417--423, 1961.

\end{thebibliography}

\end{document}